\newcommand{\bd}{\begin{description}}
\newcommand{\ed}{\end{description}}
\newcommand{\bi}{\begin{itemize}}
\newcommand{\ei}{\end{itemize}}
\newcommand{\be}{\begin{enumerate}}
\newcommand{\ee}{\end{enumerate}}
\newcommand{\beq}{\begin{equation}}
\newcommand{\eeq}{\end{equation}}
\newcommand{\beqs}{\begin{eqnarray*}}
\newcommand{\eeqs}{\end{eqnarray*}}
\definecolor{DarkGreen}{rgb}{0.2, 0.6, 0.3}
\newtheorem{theorem}{Theorem}[section]
\newtheorem{lemma}{Lemma}[section]
\newtheorem{corollary}[theorem]{Corollary}
\newtheorem{claim}{Claim}
\newtheorem{remark}{Remark}[section]
\newtheorem{fact}{Fact}
\newtheorem{proposition}{Proposition}[section]
\newtheorem{observation}{Observation}[section]
\begin{document}
\title{\textbf{On the $g$-extra connectivity of graphs} \footnote{Supported by the National
Science Foundation of China (Nos. 11601254, 11551001, 11161037,
61763041, 11661068, and 11461054) and the Science Found of Qinghai
Province (Nos.  2016-ZJ-948Q, and 2014-ZJ-907) and the  Qinghai Key
Laboratory of Internet of Things Project (2017-ZJ-Y21).} }

\author{
Zhao Wang\footnote{College of Science, China Jiliang University,
Hangzhou 310018, China. {\tt wangzhao@mail.bnu.edu.cn}}, \ \ Yaping
Mao\footnote{Corresponding author} \footnote{School of Mathematics
and Statistis, Qinghai Normal University, Xining, Qinghai 810008,
China. {\tt maoyaping@ymail.com}}, \ \ Sun-Yuan Hsieh
\footnote{Department of Computer Science and Information
Engineering, National Cheng Kung University, Tainan 701, Taiwan {\tt
hsiehsy@mail.ncku.edu.tw}}}
\date{}
\maketitle

\begin{abstract}
Connectivity and diagnosability are two important parameters for the
fault tolerant of an interconnection network $G$. In 1996,
F\`{a}brega and Fiol proposed the $g$-extra connectivity of $G$. A
subset of vertices $S$ is said to be a \emph{cutset} if $G-S$ is not
connected. A cutset $S$ is called an \emph{$R_g$-cutset}, where $g$
is a non-negative integer, if every component of $G-S$ has at least
$g+1$ vertices. If $G$ has at least one $R_g$-cutset, the
\emph{$g$-extra connectivity} of $G$, denoted by $\kappa_g(G)$, is
then defined as the minimum cardinality over all $R_g$-cutsets of
$G$. In this paper, we first obtain the exact values of $g$-extra
connectivity of some special graphs. Next, we show that $1\leq
\kappa_g(G)\leq n-2g-2$ for $0\leq g\leq \left\lfloor
\frac{n-3}{2}\right\rfloor$, and graphs with $\kappa_g(G)=1,2,3$ and
trees with $\kappa_g(T_n)=n-2g-2$ are characterized, respectively.
In the end,
we get the three extremal results for the $g$-extra connectivity. \\[2mm]
{\bf Keywords:} Connectivity, $g$-extra connectivity, extremal problem.\\[2mm]
{\bf AMS subject classification 2010:} 05C40; 05C05; 05C76.
\end{abstract}

\section{Introduction}

For a graph $G$, let $V(G)$, $E(G)$, $e(G)$, $\overline{G}$, and
$diam(G)$ denote the set of vertices, the set of edges, the size,
the complement, and the diameter of $G$, respectively. A subgraph
$H$ of $G$ is a graph with $V(H)\subseteq V(G)$, $E(H)\subseteq
E(G)$, and the endpoints of every edge in $E(H)$ belonging to
$V(H)$. For any subset $X$ of $V(G)$, let $G[X]$ denote the subgraph
induced by $X$; similarly, for any subset $F$ of $E(G)$, let $G[F]$
denote the subgraph induced by $F$. We use $G-X$ to denote the
subgraph of $G$ obtained by removing all the vertices of $X$
together with the edges incident with them from $G$; similarly, we
use $G-F$ to denote the subgraph of $G$ obtained by removing all the
edges of $F$ from $G$. If $X=\{v\}$ and $F=\{e\}$, we simply write
$G-v$ and $G-e$ for $G-\{v\}$ and $G-\{e\}$, respectively. For two
subsets $X$ and $Y$ of $V(G)$ we denote by $E_G[X,Y]$ the set of
edges of $G$ with one end in $X$ and the other end in $Y$. If
$X=\{x\}$, we simply write $E_G[x,Y]$ for $E_G[\{x\},Y]$. The {\it
degree}\index{degree} of a vertex $v$ in a graph $G$, denoted by
$deg_G(v)$, is the number of edges of $G$ incident with $v$. Let
$\delta(G)$ and $\Delta(G)$ be the minimum degree and maximum degree
of the vertices of $G$, respectively. The set of neighbors of a
vertex $v$ in a graph $G$ is denoted by $N_G(v)$. The {\it union}
$G\cup H$ of two graphs $G$ and $H$ is the graph with vertex set
$V(G)\cup V(H)$ and edge set $E(G)\cup E(H)$. If $G$ is the disjoint
union of $k$ copies of a graph $H$, we simply write $G=kH$. The
\emph{connectivity} $\kappa(G)$ of a graph $G$ is the minimum number
of vertices whose removal results in a disconnected graph or only
one vertex left.

With the rapid development of VLSI technology, a multiprocessor
system may contain hundreds or even thousands of nodes, and some of
them may be faulty when the system is implemented. As the number of
processors in a system increases, the possibility that its
processors may be comefaulty also increases. Because designing such
systems without defects is nearly impossible, reliability and fault
tolerance are two of the most critical concerns of multiprocessor
systems \cite{XuWangWang}.

By the definition proposed by Esfahanian \cite{Esfahanian}, a
multiprocessor system is fault tolerant if it can remain functional
in the presence of failures. Two basic functionality criteria have
received considerable attention. The first criterion for a system to
be regarded as functional is whether the network logically contains
a certain topological structure. This is the problem that occurs
when embedding one architecture into another \cite{Leighton, Xu}.
This approach involves using system-wide redundancy and
reconfiguration. The second functionality criterion considers a
multiprocessor system functional if a fault-free communication path
exists between any two fault-free nodes; that is, the topological
structure of the multiprocessor system remains connected in the
presence of certain failures. Thus, connectivity and edge
connectivity are two major measurements of this criterion \cite{Xu}.
The \emph{connectivity} of a graph $G$, denoted by $\kappa(G)$, is
the minimal number of vertices whose removal from produces a
disconnected graph or only one vertex; the \emph{edge connectivity}
of a graph $G$, denoted by $\lambda(G)$, is the minimal number of
edges whose removal from produces a disconnected graph. However,
these two parameters tacitly assume that all vertices that are
adjacent to, or all edges that are incident to, the same vertex can
potentially fail simultaneously. This is practically impossible in
some network applications. To address this deficiency, two specific
terms forbidden faulty set and forbidden faulty edge set are
introduced. The vertices in a forbidden faulty set or the edges in a
forbidden faulty edge set cannot fail simultaneously.

The $g$-extra connectivity has been an object of interest for many
years, and it was firstly introduced by F\`{a}brega and Fiol
\cite{FabregaFiol}. A subset of vertices $S$ is said to be a
\emph{cutset} if $G-S$ is not connected. A cutset $S$ is called an
\emph{$R_g$-cutset}, where $g$ is a non-negative integer, if every
component of $G-S$ has at least $g+1$ vertices. If $G$ has at least
one $R_g$-cutset, the \emph{$g$-extra connectivity} of $G$, denoted
by $\kappa_g(G)$, is then defined as the minimum cardinality over
all $R_g$-cutsets of $G$. Clearly, $\kappa_0(G) =\kappa(G)$ for any
connected non-complete graph $G$. So the $g$-extra connectivity can
be viewed as a generalization of the traditional connectivity, and
it can more accurately evaluate the reliability and fault tolerance
for large-scale parallel processing systems accordingly. For more
research on $g$-extra connectivity, we refer to
\cite{ChangTsaiHsieh, FabregaFiol, GuHao, GuHaoLiu, RenWang, WangMa,
WangWangWang, XuLinZhouHsieh, ZhangZhou, Zhou}.

The monotone property of $\kappa_g(G)$ for non-negative integer $g$
is true.
\begin{proposition}\label{pro1-1}
Let $g$ be a non-negative integer, and let $G$ be a connected graph.
Then
$$
\kappa_{g}(G)\leq \kappa_{g+1}(G)
$$
\end{proposition}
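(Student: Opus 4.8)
The plan is to show that every $R_{g+1}$-cutset of $G$ is automatically an $R_g$-cutset, so that the family of vertex sets over which the minimum defining $\kappa_{g+1}(G)$ is taken is contained in the family defining $\kappa_g(G)$. The inequality then follows from the elementary principle that a minimum taken over a larger family cannot exceed a minimum taken over a subfamily. Let $\mathcal{S}_i$ denote the collection of all $R_i$-cutsets of $G$.

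First I would fix an arbitrary $R_{g+1}$-cutset $S$ of $G$; by definition this is a cutset such that every component of $G-S$ has at least $(g+1)+1 = g+2$ vertices. Since $g+2 \geq g+1$, every component of $G-S$ has at least $g+1$ vertices, so $S$ satisfies the defining condition of an $R_g$-cutset. This establishes the inclusion $\mathcal{S}_{g+1} \subseteq \mathcal{S}_g$. In words, the constraint ``every component has at least $g+1$ vertices'' only becomes more restrictive as $g$ grows, so tightening it from $g$ to $g+1$ can only shrink the admissible family of cutsets.

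Next I would invoke the fact that if $\emptyset \neq A \subseteq B$ are nonempty families of finite vertex sets, then $\min_{S \in B} |S| \leq \min_{S \in A} |S|$. Applying this with $A = \mathcal{S}_{g+1}$ and $B = \mathcal{S}_g$ yields $\kappa_g(G) \leq \kappa_{g+1}(G)$ directly from the definition of the $g$-extra connectivity as the minimum cardinality over all $R_g$-cutsets.

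The only point requiring genuine care — and hence the main (if modest) obstacle — is the well-definedness of the two quantities being compared. The statement implicitly presumes that $\kappa_{g+1}(G)$ exists, i.e.\ that $G$ possesses at least one $R_{g+1}$-cutset, so that $\mathcal{S}_{g+1} \neq \emptyset$; the inclusion $\mathcal{S}_{g+1} \subseteq \mathcal{S}_g$ then forces $\mathcal{S}_g \neq \emptyset$ as well, guaranteeing that $\kappa_g(G)$ is defined and that the comparison is meaningful. Beyond this bookkeeping I expect no computation, since the content of the proposition is essentially the monotonicity in $g$ of the component-size requirement.
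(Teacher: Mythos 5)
Your proof is correct and follows essentially the same approach as the paper: the core observation in both is that any set whose removal leaves components of size at least $g+2$ also leaves components of size at least $g+1$, the paper applying this to a minimum $R_{g+1}$-cutset while you phrase it as the inclusion $\mathcal{S}_{g+1}\subseteq \mathcal{S}_g$ of cutset families. Your explicit remark that the inequality presupposes the existence of an $R_{g+1}$-cutset (so that both quantities are defined) is a point the paper leaves implicit, but it does not change the argument.
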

\begin{proof}
From the definition of $\kappa_{g+1}(G)$, there exist $X\subseteq
V(G)$ and $|X|=\kappa_{g+1}(G)$ such that each connected component
of the resulting graph has at least $g+2$ vertices. Clearly, each
connected component has at least $g+1$ vertices, and hence
$\kappa_{g}(G)\leq \kappa_{g+1}(G)$.
\end{proof}

The monotone property of $\kappa_0(G)$ is true in terms of connected
graphs $G$.
\begin{observation}\label{obs1-2}
Let $G$ be a connected graph. If $H$ is a spanning subgraph of $G$,
then $\kappa_0(H)\leq \kappa_0(G)$.
\end{observation}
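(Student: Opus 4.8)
The plan is to produce a single vertex set that acts as a cutset for both $G$ and $H$ and then compare cardinalities. First I would let $S \subseteq V(G)$ be a minimum cutset realizing the connectivity of $G$, so that $|S| = \kappa_0(G) = \kappa(G)$ and $G - S$ is either disconnected or reduced to a single vertex.

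The central point is that deleting edges cannot improve reachability. Since $H$ is a spanning subgraph of $G$, we have $V(H) = V(G)$ and $E(H) \subseteq E(G)$, and therefore $V(H - S) = V(G - S)$ while $E(H - S) \subseteq E(G - S)$. I would then verify that the same $S$ is a cutset of $H$ by splitting into the two cases for $G - S$. If $G - S$ consists of a single vertex, then $H - S$ consists of exactly that vertex and the cutset condition for $\kappa_0$ is met at once. If $G - S$ is disconnected, choose $u$ and $v$ in distinct components of $G - S$; any path from $u$ to $v$ in $H - S$ would also be a path from $u$ to $v$ in $G - S$, contradicting the separation, so $u$ and $v$ lie in different components of $H - S$ and $H - S$ is disconnected as well.

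In either case $S$ is a cutset of $H$ in the sense defining $\kappa_0$, so $\kappa_0(H) \leq |S| = \kappa_0(G)$, which is the desired inequality. If $H$ is itself disconnected, the empty set already witnesses $\kappa_0(H) = 0$, so the bound is immediate; note that the argument above only ever yields an upper bound on $\kappa_0(H)$, so it remains valid in this degenerate situation.

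I do not expect a real obstacle, since the statement is in essence the monotonicity of connectivity under edge deletion. The one thing requiring care is the ``single vertex left'' clause in the definition of $\kappa_0$, which has to be tracked so that the proof stays correct when the relevant pieces of $G$ behave like complete graphs; treating that clause in parallel with the disconnected case, as above, disposes of it cleanly.
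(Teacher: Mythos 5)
Your argument is correct and is essentially the intended one: the paper states this observation without proof, and the standard justification is exactly your central step, namely that $H-S$ has the same vertex set as $G-S$ but only a subset of its edges, so any set that disconnects $G$ also disconnects its spanning subgraph $H$.

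One caveat about your handling of the ``single vertex left'' branch, which you flagged as the delicate point: under this paper's definitions it is actually handled incorrectly, though harmlessly. Here $\kappa_0$ is the $0$-extra connectivity, i.e., the minimum size of a set $S$ such that $G-S$ is \emph{not connected} (the condition that every component have at least $g+1=1$ vertices being vacuous); the single-vertex clause belongs to $\kappa$, not to $\kappa_0$, and the paper explicitly notes $\kappa_0(G)=\kappa(G)$ only for connected \emph{non-complete} $G$. So if $G-S$ is a single vertex, $S$ is not a cutset in the paper's sense and you may not conclude $\kappa_0(H)\le |S|$ from it. The branch is vacuous, however: it can only arise when $G$ is complete, and then $\kappa_0(G)$ does not exist, so the statement is empty in that case. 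For every $G$ for which the inequality is meaningful, a minimum $R_0$-cutset genuinely disconnects $G$, and your main case (paths in $H-S$ are paths in $G-S$) carries the whole proof; your observation that a disconnected $H$ gives $\kappa_0(H)=0$ via the empty cutset is also consistent with the paper's definitions.
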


But for $g\geq 1$, the above monotone property is not true.
\begin{remark}\label{rem1-1}
Let $G$ be a graph obtained from four cliques $X_1,X_2,Y_1,Y_2$ with
$|V(X_i)|\geq g+1 \ (i=1,2)$ and $|V(Y_j)|\geq g+1 \ (j=1,2)$ and
three vertices $u,v,w$ by adding edges in $E_G[u,X_1]\cup
E_G[u,X_2]\cup E_G[u,Y_1]\cup E_G[u,Y_2]\cup E_G[v,Y_1]\cup
E_G[v,Y_2]\cup E_G[w,Y_1]\cup E_G[w,Y_2]\cup \{uv,uw\}$. Let $H$ be
a graph obtained from $G$ by deleting all edges in $E(X_1)\cup
E(X_2)\cup E_G[u,Y_1]\cup \{uv,uw\}$. Clearly, $H$ is a spanning
subgraph of $G$; see Figure $1$. We first show that $\kappa_g(G)=1$.
By deleting the vertex $u$, there are three components and each of
them contains at least $g+1$ vertices, and hence $\kappa_g(G)\leq
1$, and so $\kappa_g(G)=1$. Next, we show that $\kappa_g(H)=2$. By
deleting the vertices $v,w$, there are two components and each of
them contains at least $g+1$ vertices, and hence $\kappa_g(H)\leq
2$. Note that $u$ is the unique cut vertex in $H$. By deleting $u$,
there are isolated vertices in $X_1\cup X_2$, and hence
$\kappa_g(H)\geq 2$. So $\kappa_g(H)=2>\kappa_g(G)$.
\end{remark}
\begin{figure}[!hbpt]
\begin{center}
\includegraphics[scale=0.7]{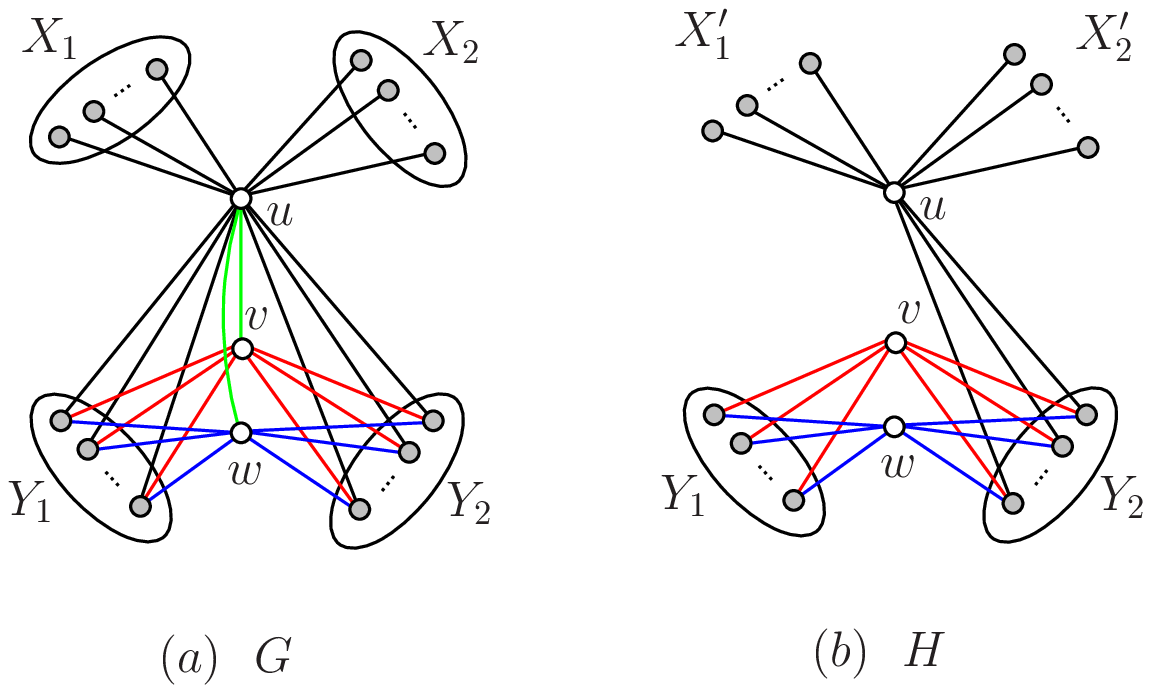}
\end{center}
\begin{center}
Figure 1: Graphs for Remark 1.1.
\end{center}\label{fig1}
\end{figure}

The range of the integer $g$ can be determined immediately.
\begin{proposition}\label{pro1-3}
Let $g$ be a non-negative integer. If $G$ has its $g$-extra
connectivity, then
$$
0\leq g\leq \left\lfloor \frac{n-3}{2}\right\rfloor
$$
and
$$
e(G)\leq {n\choose 2}-(g+1)^2.
$$
\end{proposition}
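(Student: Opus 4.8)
The plan is to draw everything from a single $R_g$-cutset, which exists by the hypothesis that $G$ has its $g$-extra connectivity. Fix such a set $S$, and let $C_1, C_2, \ldots, C_k$ be the components of $G-S$. By the definition of an $R_g$-cutset, $G-S$ is disconnected, so $k \geq 2$, and every component satisfies $|V(C_i)| \geq g+1$. Since $G$ is connected, the empty set is not a cutset, so $|S| \geq 1$; this seemingly harmless observation is what separates the sharp bound $\lfloor (n-3)/2 \rfloor$ from the weaker $\lfloor (n-2)/2 \rfloor$.

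For the bound on $g$, I would simply count vertices. Since the $k \geq 2$ components are pairwise disjoint and disjoint from $S$, we have $n = |S| + \sum_{i=1}^{k} |V(C_i)| \geq 1 + 2(g+1) = 2g+3$. Rearranging gives $g \leq (n-3)/2$, and as $g$ is an integer this forces $g \leq \lfloor (n-3)/2 \rfloor$; together with the standing hypothesis $g \geq 0$ this is the first claim.

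For the edge bound, the key observation is that no edge of $G$ joins $C_1$ to $C_2$. Indeed, any such edge would be incident to no vertex of $S$, and would therefore survive in $G-S$, contradicting that $C_1$ and $C_2$ lie in distinct components of $G-S$. Consequently every one of the $|V(C_1)|\cdot|V(C_2)|$ potential edges of the complete bipartite graph on the parts $V(C_1)$ and $V(C_2)$ is absent from $G$. Since $|V(C_1)| \geq g+1$ and $|V(C_2)| \geq g+1$, at least $(g+1)^2$ edges are missing relative to $K_n$, whence $e(G) \leq {n\choose 2} - (g+1)^2$.

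The argument is short, and I do not anticipate a serious obstacle; the only genuine subtlety is recognizing that the missing edges are absent in $G$ itself, not merely in $G-S$. This is exactly the component-separation property of the cutset $S$, so no extra work is needed. I would also remark that one could sharpen the edge count by summing over all pairs of components, but since only $(g+1)^2$ is claimed, isolating the two components $C_1$ and $C_2$ suffices.
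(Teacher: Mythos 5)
Your proposal is correct and follows essentially the same argument as the paper: both bounds come from a single $R_g$-cutset, with the vertex count $n \geq |S| + 2(g+1) \geq 2g+3$ giving the range of $g$, and the absence of all $|V(C_1)|\cdot|V(C_2)| \geq (g+1)^2$ edges between two components giving the edge bound. Your explicit justification that $|S|\geq 1$ and that the missing edges are absent in $G$ itself (not just in $G-S$) only makes precise what the paper leaves implicit.
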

\begin{proof}
From the definition of $g$-extra connectivity, we delete at least
one vertex, and the resulting graph has at least two connected
components, and each connected component has at least $g+1$
vertices. Then $n-1\geq 2(g+1)$, and hence $0\leq g\leq \lfloor
\frac{n-3}{2}\rfloor$.

Since $\kappa_g(G)$ exists, it follows that there exists $X\subseteq
V(G)$ with $|X|=\kappa_g(G)$ such that $G-X$ is not connected and
each connected component of $G-X$ has at least $g+1$ vertices. Let
$C_1,C_2,\ldots,C_r$ be the connected components of $G-X$. Then
$|V(C_i)|\geq g+1$ for $i=1,2$. Since there is no edges from $C_1$
to $C_2$, it follows that $e(\overline{G})\geq
|V(C_1)||V(C_2)|=(g+1)^2$, and hence $e(G)\leq {n\choose
2}-(g+1)^2$.
\end{proof}

We consider the following problems in which their solutions will
give insights in designing interconnection networks with respect to
the size of the networks and the targeted $g$-extra connectivity.
\begin{itemize}
\item[] \noindent {\bf Problem 1.} Given two positive integers $n$ and
$k$, compute the minimum integer $s(n,k)=\min\{|E(G)|:G\in
\mathscr{G}(n,k)\}$, where $\mathscr{G}(n,k)$ the set of all graphs
of order $n$ (that is, with $n$ vertices) with $g$-extra
connectivity $k$.

\item[] \noindent {\bf Problem 2.} Given two positive integers $n$ and $k$,
compute the minimum integer $f(n,k)$ such that for every connected
graph $G$ of order $n$, if $|E(G)|\geq f(n,k)$ then $\kappa_g(G)\geq
k$.

\item[] \noindent {\bf Problem 3.} Given two positive integers $n$ and $k$,
compute the maximum integer $g(n,k)$ such that for every graph $G$
of order $n$, if $|E(G)|\leq g(n,k)$ then $\kappa_g(G)\leq k$.
\end{itemize}

In Section $2$, we first obtain the exact values of $g$-extra
connectivities of complete bipartite graphs, complete multipartite
graphs, joined graphs and corona graphs. For a connected graph $G$
of order $n$, we show that $\kappa_g(G)\leq n-diam(G)$ for $0\leq
g\leq \left\lfloor\frac{diam(G)}{2}\right\rfloor-1$, and $1\leq
\kappa_g(G)\leq n-2g-2$ for $0\leq g\leq \left\lfloor
\frac{n-3}{2}\right\rfloor$ in Section $3$. Graphs with
$\kappa_g(G)=1,2,3$ and trees with $\kappa_g(T_n)=n-2g-2$ are
characterized, respectively, in Section $4$. In the end, we get the
extremal results for the $g$-extra connectivity in Section $5$.

\section{Results for special graphs}

In this section, we obtain the exact values for $g$-extra
connectivity of some special graphs.
\begin{proposition}\label{pro2-2}
Let $g$ be a non-negative integer.

$(1)$ If $K_{a,b} \ (a\geq b\geq 2)$ is a complete bipartite graph,
then $g=0$ and $\kappa_g(K_{a,b})=b$.

$(2)$ Let $r$ be an integer with $r\geq 3$. For complete
multipartite graph $K_{n_1,n_2,\ldots,n_r}$ $(n_1\leq n_2\leq \ldots
\leq n_r)$, we have $g=0$ and
$$
\kappa_g(K_{n_1,n_2,\ldots,n_r})=\sum_{i=1}^{r-1}n_i.
$$
\end{proposition}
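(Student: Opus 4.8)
The plan is to treat both parts uniformly by exploiting the fact that in a complete multipartite graph any two vertices lying in distinct parts are adjacent, so the only way to disconnect the graph is to delete all but one part. I would first pin down why $g$ is forced to equal $0$, and then compute $\kappa_0$, which by the identity $\kappa_0(G)=\kappa(G)$ for connected non-complete graphs equals the ordinary connectivity (here $a\geq b\geq 2$, and in $(2)$ the mere existence of an $R_g$-cutset forces some part to contain at least two vertices, so the graph is non-complete).

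For the forcing of $g=0$ in $(1)$, write the parts of $K_{a,b}$ as $A$ and $B$ with $|A|=a$, $|B|=b$, and let $S$ be any $R_g$-cutset. The surviving vertices induce $K_{|A\setminus S|,|B\setminus S|}$, which is connected whenever both sides are nonempty; hence disconnection forces one side, say $B\setminus S$, to be empty. Then $G-S$ consists of the $|A\setminus S|\geq 2$ pairwise non-adjacent vertices of $A\setminus S$, so every component is a single vertex. Since each component must have at least $g+1$ vertices, this yields $g+1\leq 1$, i.e. $g=0$. The identical argument, with ``one surviving part'' replacing ``one surviving side'', handles $(2)$: if at least two parts survive the deletion of $S$ then $G-S$ is connected (any two surviving vertices are joined directly, or through a vertex of a third surviving part), so disconnection leaves exactly one part, all of whose surviving vertices are isolated, again forcing $g=0$.

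It then remains to compute $\kappa_0=\kappa$. For the upper bound in $(2)$ I would take $S=V(G)\setminus V_r$, where $V_r$ is a largest part; then $|S|=\sum_{i=1}^{r-1}n_i$ and $G-S$ is a set of $n_r\geq 2$ isolated vertices, so $S$ is an $R_0$-cutset and $\kappa_0\leq \sum_{i=1}^{r-1}n_i$. For the lower bound, suppose $|S|<\sum_{i=1}^{r-1}n_i=n-n_r$; leaving at most one part nonempty requires deleting at least $n-n_r$ vertices, so at least two parts survive and $G-S$ is connected. Hence every cutset has size at least $n-n_r$, giving $\kappa_0=\sum_{i=1}^{r-1}n_i$. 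Part $(1)$ is the case $r=2$: removing $B$ isolates the $a\geq 2$ vertices of $A$, while any $S$ with $|S|<b$ leaves both $A\setminus S$ and $B\setminus S$ nonempty and hence $G-S$ connected, so $\kappa_0=b$.

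All steps are short; the only point requiring care is the structural claim that a complete multipartite graph with at least two nonempty parts is connected, since this is precisely what rules out every $R_g$-cutset for $g\geq 1$ while simultaneously driving both the $g=0$ conclusion and the connectivity computation. I would state and verify it once at the outset and invoke it in both parts.
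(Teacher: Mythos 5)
Your proposal is correct and takes essentially the same approach as the paper: disconnecting a complete (multi)partite graph forces the deletion of all vertices outside a single part, so the survivors are isolated vertices, which forces $g=0$, and the minimum such cutset is the complement of a largest part. The only difference is one of rigor --- you spell out the lower bound and the caveat that an $R_g$-cutset in $(2)$ requires $n_r\geq 2$, details the paper's terse proof leaves implicit.
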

\begin{proof}
$(1)$ By deleting any vertex in $K_{a,b}$, the resulting graph is
still a complete bipartite graph and it is connected. If the
resulting graph is not connected, then we must delete all the
vertices of one part. Then $g=0$. Since $a\geq b\geq 2$, we have
$\kappa_g(K_{a,b})=b$.

$(2)$ Similarly to the proof of $(1)$, we can get
$\kappa_g(K_{n_1,n_2,\ldots,n_r})=\sum_{i=1}^{r-1}n_i$.
\end{proof}

The {\it join} or {\it complete product} of two disjoint graphs $G$
and $H$, denoted by $G\vee H$, is the graph with vertex set
$V(G)\cup V(H)$ and edge set $E(G)\cup E(H)\cup \{uv\,|\, u\in V(G),
v\in V(H)\}$.
\begin{theorem}\label{th2-1}
Let $n,m,g$ be three non-negative integers with $0\leq g\leq \lfloor
\frac{n-3}{2}\rfloor$ and $0\leq g\leq \lfloor
\frac{m-3}{2}\rfloor$. Let $G,H$ be two connected graph of order
$n,m$, respectively.

$(1)$ If $0\leq g\leq \min\{\lfloor \frac{n-3}{2}\rfloor,\lfloor
\frac{m-3}{2}\rfloor\}$, then
$$
\kappa_g(G\vee H)=\min\{\kappa_g(G)+|V(H)|,\kappa_g(H)+|V(G)|\}.
$$

$(2)$ If $\lfloor \frac{m-3}{2}\rfloor<g\leq \lfloor
\frac{n-3}{2}\rfloor$, then $\kappa_g(G\vee H)=\kappa_g(G)+|V(H)|$.
\end{theorem}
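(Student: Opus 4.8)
The plan is to analyze the structure of an arbitrary $R_g$-cutset of the join $J = G \vee H$ and show that it must delete one entire side. Write $S = S_G \cup S_H$ for any cutset, where $S_G = S \cap V(G)$ and $S_H = S \cap V(H)$. The central observation is that in $J$ every vertex of $G$ is adjacent to every vertex of $H$, so if both $V(G)\setminus S_G$ and $V(H)\setminus S_H$ are nonempty then $J-S$ is connected: any two surviving vertices are joined either directly by a join edge, or through a common surviving vertex on the opposite side. Consequently, for $S$ to be a cutset at all we must have $S_G = V(G)$ or $S_H = V(H)$.

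Next I would treat the two resulting cases. If $S_H = V(H)$, then $J-S = G - S_G$, so $S$ is an $R_g$-cutset of $J$ exactly when $S_G$ is an $R_g$-cutset of $G$; in that event $|S| = |S_G| + |V(H)| \geq \kappa_g(G) + |V(H)|$, with equality realized by taking $S_G$ to be a minimum $R_g$-cutset of $G$ (which exists since $g \leq \lfloor (n-3)/2\rfloor$) together with all of $V(H)$. Symmetrically, if $S_G = V(G)$ then $|S| \geq \kappa_g(H) + |V(G)|$, realized analogously, but this case requires $\kappa_g(H)$ to exist, i.e.\ $g \leq \lfloor (m-3)/2\rfloor$.

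For Part $(1)$, where $g \leq \min\{\lfloor (n-3)/2\rfloor, \lfloor (m-3)/2\rfloor\}$, both cases are available, so the minimum $R_g$-cutset has size $\min\{\kappa_g(G)+|V(H)|,\ \kappa_g(H)+|V(G)|\}$, giving the claimed equality. For Part $(2)$, where $\lfloor (m-3)/2\rfloor < g \leq \lfloor (n-3)/2\rfloor$, the second case collapses: by Proposition \ref{pro1-3} applied to $H$, the inequality $g > \lfloor (m-3)/2\rfloor$ forces $m \leq 2g+2$, so deleting all of $G$ leaves at most $m-1 \leq 2g+1 < 2(g+1)$ vertices of $H$, which cannot be split into two components each of order at least $g+1$. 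Hence no $R_g$-cutset with $S_G = V(G)$ exists, only the first case survives, and $\kappa_g(G\vee H) = \kappa_g(G) + |V(H)|$.

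The main obstacle is less a difficult computation than making the structural dichotomy airtight: I must verify carefully that no cutset can disconnect $J$ while leaving vertices on both sides, and—crucially for Part $(2)$—that the side $H$ genuinely admits no $R_g$-splitting once $m \leq 2g+2$, which is precisely where the range constraint from Proposition \ref{pro1-3} does the essential work.
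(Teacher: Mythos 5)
Your proof is correct and takes essentially the same approach as the paper: the upper bound is obtained by deleting one entire side together with a minimum $R_g$-cutset of the other, and the lower bound rests on the observation that any cutset of the join must contain all of $V(G)$ or all of $V(H)$. Your explicit handling of Part $(2)$ --- ruling out the case $S_G=V(G)$ because $g>\lfloor\frac{m-3}{2}\rfloor$ (equivalently, via Proposition \ref{pro1-3}, $H$ admits no $R_g$-cutset) --- is exactly the argument the paper compresses into ``It follows from $(1)$.''
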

\begin{proof}
$(1)$ From the definition of $\kappa_g(G)$, there exists $X\subseteq
V(G)$ with $|X|=\kappa_g(G)$ such that $G-X$ is not connected and
each connected component of $G-X$ has at least $g+1$ vertices. Let
$S=X\cup V(H)$. Then $S\subseteq V(G\vee H)$ and $G\vee H-S=G-X$ is
not connected and each connected component of $G\vee H-S$ has at
least $g+1$ vertices, and hence $\kappa_g(G\vee H)\leq
|S|=|X|+|V(H)|=\kappa_g(G)+|V(H)|$. Similarly, $\kappa_g(G\vee
H)\leq \kappa_g(H)+|V(G)|$. Therefore, we have $\kappa_g(G\vee
H)\leq \min\{\kappa_g(G)+|V(H)|,\kappa_g(H)+|V(G)|\}$.

From the definition of $\kappa_g(G\vee H)$, there exists $S\subseteq
V(G\vee H)$ with $|S|=\kappa_g(G\vee H)$ such that $(G\vee H)-S$ is
not connected and each connected component of $(G\vee H)-S$ has at
least $g+1$ vertices. Since $(G\vee H)-S$ is not connected, it
follows from the structure of $G\vee H$ that $V(G)\subseteq S$ or
$V(H)\subseteq S$. Without loss of generality, let $V(H)\subseteq
S$. Let $S'=S-V(H)$. From the definition of $\kappa_g(G)$, $|S'|\geq
\kappa_g(G)$ and hence $\kappa_g(G\vee H)=|S|=|S'|+|V(H)|\geq
\kappa_g(G)+|V(H)|$. So we have $\kappa_g(G\vee H)\geq
\min\{\kappa_g(G)+|V(H)|,\kappa_g(H)+|V(G)|\}$.

$(2)$ It follows from $(1)$.
\end{proof}

\begin{remark}\label{rem2-1}
Suppose $g>\max\{\lfloor \frac{n-3}{2}\rfloor,\lfloor
\frac{m-3}{2}\rfloor\}$. From the definition of $\kappa_g(G\vee H)$,
there exists $S\subseteq V(G\vee H)$ with $|S|=\kappa_g(G\vee H)$
such that $G\vee H-S$ is not connected and each connected component
of $G\vee H-S$ has at least $g+1$ vertices. Since $G\vee H-S$ is not
connected, it follows that $V(G)\subseteq S$ or $V(H)\subseteq S$.
Without loss of generality, let $V(H)\subseteq S$. Let
$S-V(H)\subseteq V(G)$. From Proposition \ref{pro1-3}, we have
$0\leq g\leq \lfloor \frac{n-3}{2}\rfloor$, a contradiction. So
$\kappa_g(G\vee H)$ does not exist if $g>\max\{\lfloor
\frac{n-3}{2}\rfloor,\lfloor \frac{m-3}{2}\rfloor\}$.
\end{remark}

\subsection{Corona}

The {\it corona} $G*H$ is obtained by taking one copy of $G$ and
$|V(G)|$ copies of $H$, and by joining each vertex of the $i$-th
copy of $H$ with the $i$-th vertex of $G$, where
$i=1,2,\ldots,|V(G)|$. The corona graphs was introduced by Frucht
and Harary \cite{FruchtHarary}. For more details on corona graphs,
we refer to \cite{KuziakYero, LiuZhou, YeroKuziak}.

In $G*H$, let $G$ and $H$ be two graphs with
$V(G)=\{u_1,u_2,\ldots,u_{n}\}$ and $V(H)=\{v_1,v_2,\ldots,v_{m}\}$,
respectively. From the definition of corona graphs, $V(G*H)=V(G)\cup
\{(u_i,v_j)\,|\,1\leq i\leq n, \ 1\leq j\leq m\}$, where $*$ denotes
the corona product operation. For $u\in V(G)$, we use $H(u)$ to
denote the subgraph of $G*H$ induced by the vertex set
$\{(u,v_j)\,|\,1\leq j\leq m\}$. For fixed $i \ (1\leq i\leq n)$, we
have $u_i(u_i,v_j)\in E(G*H)$ for each $j \ (1\leq j\leq m)$. Then
$V(G*H)=V(G)\cup V(H(u_1))\cup V(H(u_2))\cup \ldots \cup V(H(u_n))$.
\begin{theorem}\label{th2-2}
Let $G$ be a connected graph of order at least $2$, and let $H$ be a
connected graph of order $m$.

$(1)$ If $0\leq g\leq m-1$, then $\kappa_g(G*H)=1$.

$(2)$ If $k(m+1)<g+1\leq (k+1)(m+1)$ where $1\leq k\leq
\lfloor\frac{n-3}{2}\rfloor$, then
$$
\kappa_g(G*H)=|X|(m+1),
$$
where $X$ is a minimum vertex subset of $G$ such that the resulting
graph of $G-X$ is not connected and each connected component has at
least $(k+1)$ vertices.
\end{theorem}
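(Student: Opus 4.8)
For part (1) the plan is to exhibit a single vertex whose deletion is an $R_g$-cutset. I would delete an arbitrary $G$-vertex $u_i$. Since every vertex of $H(u_i)$ has $u_i$ as its only neighbour outside $H(u_i)$, the copy $H(u_i)$ becomes a connected component of exactly $m$ vertices, while the remainder consists of $G-u_i$ (nonempty because $n\geq 2$) together with all surviving clusters; each component of this remainder contains at least one vertex $u_j$ and its full cluster $H(u_j)$, hence at least $m+1$ vertices. As $0\leq g\leq m-1$ gives $g+1\leq m$, every component of $(G*H)-u_i$ has at least $g+1$ vertices, so $\{u_i\}$ is an $R_g$-cutset and $\kappa_g(G*H)\leq 1$. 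Since $G*H$ is connected, $\kappa_g(G*H)\geq 1$, giving equality.

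For part (2) I would prove the two inequalities separately. For the upper bound, let $X$ be as in the statement, so $G-X$ is disconnected, every component of $G-X$ has order at least $k+1$, and $|X|$ is minimum with this property. Put $S=X\cup\bigcup_{u_j\in X}V(H(u_j))$, so $|S|=|X|(m+1)$. Deleting $S$ removes each chosen $G$-vertex together with its whole cluster, and what remains is the corona of $G-X$ with $H$; its components are indexed by the components of $G-X$, a component arising from $D$ having $|V(D)|(m+1)\geq (k+1)(m+1)$ vertices. Since $g+1\leq(k+1)(m+1)$, each such component has at least $g+1$ vertices, so $S$ is an $R_g$-cutset and $\kappa_g(G*H)\leq |X|(m+1)$.

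The lower bound is the crux. Let $S$ be a minimum $R_g$-cutset and write $W=S\cap V(G)$ and $S_i=S\cap V(H(u_i))$. The structural key is that $u_i$ is adjacent to all of $H(u_i)$: if $u_i\notin W$, then $u_i$ and the surviving part $V(H(u_i))\setminus S$ lie in a single piece, so the components of $(G*H)-S$ meeting $V(G)$ are precisely the components of $G-W$ together with their surviving clusters, whereas every removed vertex $u_j\in W$ leaves its surviving cluster vertices as \emph{floating} components of $H-S_j$, each of order at most $m$. Because $k\geq 1$ forces $g+1>k(m+1)\geq m+1>m$, no floating component can reach $g+1$ vertices; hence there are none, i.e. $V(H(u_j))\subseteq S$ for every $u_j\in W$, which already yields $|S|\geq |W|(m+1)$. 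Next, a component of $G-W$ of order $t$ gives a component of $(G*H)-S$ of order at most $t(m+1)$; if some $t\leq k$ this order is at most $k(m+1)<g+1$, contradicting that $S$ is an $R_g$-cutset. Thus $G-W$ is disconnected (else $(G*H)-S$ would be connected) and each of its components has at least $k+1$ vertices, so by the minimality defining $X$ we get $|W|\geq |X|$ and therefore $|S|\geq |W|(m+1)\geq |X|(m+1)$, matching the upper bound.

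The main obstacle is this lower-bound structural analysis: correctly classifying the components of $(G*H)-S$ into those touching $V(G)$ and the floating $H$-pieces, and then invoking the hypothesis $k(m+1)<g+1$ twice, once to rule out floating pieces (thereby forcing whole clusters to be deleted whenever a $G$-vertex is deleted) and once to force every component of $G-W$ to have more than $k$ vertices. Everything else is bookkeeping with the count $|S|=|W|(m+1)+\sum_{u_i\notin W}|S_i|$, and the validity of the construction in the upper bound simultaneously certifies that $\kappa_g(G*H)$ exists.
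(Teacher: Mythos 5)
Your proposal is correct and takes essentially the same route as the paper: the identical upper-bound cutset $X\cup\bigcup_{u_j\in X}V(H(u_j))$, and the identical lower-bound argument that uses $g+1>k(m+1)$ twice --- once to force whole clusters of deleted $G$-vertices into the cutset, once to force every component of $G-W$ to have more than $k$ vertices --- before invoking the minimality of $X$. If anything, your write-up is more complete than the paper's, which leaves implicit both why $V(H(u_j))\subseteq S$ for deleted $G$-vertices $u_j$ and why $G-W$ must itself be disconnected.
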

\begin{proof}
$(1)$ Note that $V(G)=\{u_1,u_2,\ldots,u_{n}\}$. It is clear that
$G*H-u_1$ is not connected. Since $0\leq g\leq m-1$, it follows that
each of connected components of $G*H-u_1$ has at least $g+1$
vertices, and hence $\kappa_g(G*H)\leq 1$. Since $G*H$ is connected,
it follows that $\kappa_g(G*H)=1$.

$(2)$ From the definition of $X$, $G-X$ is not connected and each
connected component of $G-X$ has at least $k+1$ vertices. Let
$|X|=x$. Without loss of generality, let
$X=\{u_{i_1},u_{i_2},\ldots,u_{i_{x}}\}$. Let
$$
S=X\cup V(H(u_{i_1}))\cup V(H(u_{i_2}))\cup \ldots \cup
V(H(u_{i_{x}})).
$$
Then $|S|=(m+1)x$. Clearly, $G*H-S$ is not connected and each
connected component of $G*H-S$ has at least $(k+1)(m+1)$ vertices,
and hence $\kappa_g(G*H)\leq |S|=(m+1)x=|X|(m+1)$.

It suffices to show that $\kappa_g(G*H)\geq |X|(m+1)$. From the
definition of $\kappa_g(G*H)$, there exists $Y\subseteq V(G*H)$ with
$|Y|=\kappa_g(G*H)$ such that $G*H-Y$ is not connected and each
connected component of $G*H-Y$ has at least $g+1$ vertices.
\begin{claim}\label{Clm:1}
$|Y\cap V(G)|\geq |X|$.
\end{claim}
\begin{proof}
Assume, to the contrary, that $|Y\cap V(G)|<|X|$. Then $G-Y$ is not
connected, and there exists a connected component, say $C_1$, of
$G-Y$ has at most $k$ vertices. Let
$V(C_1)=\{u_{j_1},u_{j_2},\ldots,u_{j_y}\}$, where $y\leq k$. Then
$C_1 \cup H(u_{j_1})\cup H(u_{j_2})\cup \ldots \cup H(u_{j_{y}})$ is
a connected component of $G*H-Y$ having at most $k(m+1)$ vertices,
which contradicts to the fact that $g+1>k(m+1)$, a contradiction.
\end{proof}

From Claim \ref{Clm:1}, we have $|Y\cap V(G)|\geq |X|$. Let $Y\cap
V(G)=\{u_{a_1},u_{a_2},\ldots,u_{a_z}\}$. Then $H(u_{a_i})\subseteq
Y$ for each $i \ (1\leq i\leq z)$, and hence $\kappa_g(G*H)=|Y|\geq
|Y\cap V(G)|(m+1)\geq |X|(m+1)$.
\end{proof}

\section{Upper and lower bounds}

The following upper and lower bounds are immediate.
\begin{proposition}\label{pro3-1}
Let $g$ be a non-negative integer and let $G$ be a connected graph
of order $n$ such that $0\leq g\leq \left\lfloor
\frac{n-\kappa(G)-2}{2}\right\rfloor$. Then
$$
\kappa(G)\leq \kappa_g(G)\leq n-2g-2.
$$
Moreover, the upper and lower bounds are sharp.
\end{proposition}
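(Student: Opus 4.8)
The plan is to handle the two inequalities separately, observing that each is essentially immediate once $\kappa_g(G)$ is assumed to exist, and then to put the real effort into the two sharpness constructions. Throughout I assume $G$ has at least one $R_g$-cutset (otherwise $\kappa_g(G)$ is undefined); note that the hypothesis $0\leq g\leq \left\lfloor \frac{n-\kappa(G)-2}{2}\right\rfloor$ is equivalent to $\kappa(G)\leq n-2g-2$, which is exactly the condition making the asserted lower and upper bounds consistent. For the lower bound, I would observe that every $R_g$-cutset is in particular a cutset of $G$, so the minimum cardinality of an $R_g$-cutset is at least the minimum cardinality of a cutset; since $G$ is connected and non-complete (a complete graph admits no cutset, hence no $R_g$-cutset), the latter equals $\kappa(G)$, giving $\kappa_g(G)\geq \kappa(G)$. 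Equivalently, this follows by iterating Proposition \ref{pro1-1} from $\kappa_0(G)=\kappa(G)$ up to $\kappa_g(G)$.

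For the upper bound, let $S$ be a minimum $R_g$-cutset, so that $|S|=\kappa_g(G)$. By definition $G-S$ is disconnected and each of its (at least two) components has at least $g+1$ vertices, so the vertices outside $S$ number at least $2(g+1)$; hence $n-\kappa_g(G)\geq 2g+2$, that is, $\kappa_g(G)\leq n-2g-2$. This is the same counting already used in the proof of Proposition \ref{pro1-3}.

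The substantive part is sharpness. For the lower bound I would take $G=C_n$: since $\kappa(C_n)=2$, deleting two non-adjacent, nearly antipodal vertices splits the cycle into two paths whose sizes can each be made at least $g+1$ precisely when $g\leq \left\lfloor\frac{n-4}{2}\right\rfloor=\left\lfloor\frac{n-\kappa(C_n)-2}{2}\right\rfloor$, so such a pair is an $R_g$-cutset and $\kappa_g(C_n)=2=\kappa(C_n)$. For the upper bound I would take $G=K_{n-2g-2}\vee(K_{g+1}\cup K_{g+1})$, the join of a clique on $n-2g-2$ vertices with two disjoint cliques on $g+1$ vertices each; deleting the whole central clique leaves the two copies of $K_{g+1}$ as components, so $\kappa_g(G)\leq n-2g-2$. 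The matching lower estimate is the crux: any vertex of the central clique that is left undeleted is adjacent to every other vertex and therefore keeps $G-S$ connected, forcing $S$ to contain the entire central clique and hence $|S|\geq n-2g-2$. (Alternatively, the trees with $\kappa_g(T_n)=n-2g-2$ characterized in Section $4$ furnish upper-bound extremal examples.)

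The only place needing genuine care is verifying the upper-bound example: I must show not merely that the central clique is an $R_g$-cutset but that no smaller $R_g$-cutset exists, and this is exactly where the \emph{a surviving central vertex dominates the graph} argument is essential; by contrast the cycle computation for the lower bound is entirely routine.
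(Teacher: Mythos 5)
Your proof is correct, and while the two inequalities themselves are handled exactly as in the paper (the lower bound from the fact that every $R_g$-cutset is a cutset, equivalently by iterating Proposition \ref{pro1-1} down to $\kappa_0(G)=\kappa(G)$; the upper bound by the counting $n-|S|\geq 2(g+1)$, which the paper phrases contrapositively), your sharpness constructions take a genuinely different route. The paper certifies the upper bound by a forward reference to the tree family $T_n^*$ of Theorem \ref{th4-3}, and certifies the lower bound only in the degenerate case $g=0$, where $\kappa_0(G)=\kappa(G)$ is a tautology. You instead give a self-contained join construction $K_{n-2g-2}\vee(K_{g+1}\cup K_{g+1})$ for the upper bound, where the observation that any surviving central vertex dominates $G-S$ forces every cutset to contain the whole central clique, and the cycle $C_n$ for the lower bound, where nearly antipodal deletion gives $\kappa_g(C_n)=2=\kappa(C_n)$ for \emph{every} admissible $g\leq\left\lfloor\frac{n-4}{2}\right\rfloor$. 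Your approach buys two things: the lower-bound sharpness becomes a statement about all $g$ in range rather than just $g=0$, and the proposition no longer depends on the later (and harder) tree characterization, so it can stand on its own. What the paper's approach buys is economy and a pointer to sparse extremal examples (trees), which it reuses in the extremal problems of Section $5$. One further point in your favor: you explicitly flag that $\kappa_g(G)$ must be assumed to exist, which the paper glosses over, and this caveat is genuinely needed, since the stated hypothesis $0\leq g\leq\left\lfloor\frac{n-\kappa(G)-2}{2}\right\rfloor$ does not by itself guarantee existence (the star $K_{1,n-1}$ with $g=1$ satisfies it but has no $R_1$-cutset).
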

\begin{proof}
From Proposition \ref{pro3-1}, we have $\kappa_g(G)\geq
\kappa_0(G)=\kappa(G)$. Suppose $\kappa_g(G)\geq n-2g-1$. From the
definition of $\kappa_g(G)$, there exists $X\subseteq V(G)$ and
$|X|=\kappa_g(G)$ such that there are at least two components and
one of them has no more than $g$ vertices, a contradiction. So
$\kappa(G)\leq \kappa_g(G)\leq n-2g-2$. Theorem \ref{th4-3} shows
that the upper bound is sharp. If $k=0$, then
$\kappa(G)=\kappa_0(G)$. This implies that the lower bound is sharp.
\end{proof}

The following corollary is immediate from Proposition \ref{pro3-1}.
\begin{corollary}\label{cor3-1}
Let $n,g$ be two integers with $0\leq g\leq \left\lfloor
\frac{n-3}{2}\right\rfloor$. If $G$ is a connected graph of order
$n$, then
$$
1\leq \kappa_g(G)\leq n-2g-2.
$$
Moreover, the upper and lower bounds are sharp.
\end{corollary}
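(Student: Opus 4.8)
The plan is to treat the two inequalities separately, since each is a straightforward consequence of material already developed. The lower bound is essentially the statement that a connected graph cannot be disconnected for free, while the upper bound is the same packing count that appears in the proof of Proposition \ref{pro3-1}; the only point requiring care is that the hypothesis here, $0\le g\le \lfloor (n-3)/2\rfloor$, is \emph{weaker} (it permits larger $g$) than the hypothesis $0\le g\le \lfloor (n-\kappa(G)-2)/2\rfloor$ of Proposition \ref{pro3-1}, so I cannot quote that proposition verbatim across the whole range and must instead rerun the short counting argument directly.

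For the lower bound, I would first note that $G$ connected forces $\kappa(G)\ge 1$. Since any $R_g$-cutset is in particular a cutset, and since $\kappa_0(G)=\kappa(G)$ for a connected non-complete $G$, Proposition \ref{pro1-1} gives $\kappa_g(G)\ge \kappa_0(G)=\kappa(G)\ge 1$. Equivalently, and without invoking monotonicity, any $R_g$-cutset $S$ satisfies $|S|\ge 1$ because $G-\varnothing=G$ is connected, so the minimum cardinality $\kappa_g(G)$ is at least $1$.

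For the upper bound I would take an $R_g$-cutset $X$ realizing $|X|=\kappa_g(G)$, which exists precisely because the hypothesis $0\le g\le \lfloor (n-3)/2\rfloor$ is the existence range supplied by Proposition \ref{pro1-3}. By definition $G-X$ has at least two components $C_1,C_2$, each with $|V(C_i)|\ge g+1$. Counting vertices gives $n\ge |X|+|V(C_1)|+|V(C_2)|\ge \kappa_g(G)+2(g+1)$, hence $\kappa_g(G)\le n-2g-2$. This argument uses only that $\kappa_g(G)$ exists and never refers to $\kappa(G)$, so it remains valid across the full range of $g$ allowed in the corollary, which is the one technical improvement over Proposition \ref{pro3-1}.

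Finally, for sharpness I would point to the same witnesses used for Proposition \ref{pro3-1}: Theorem \ref{th4-3} (the characterization of trees with $\kappa_g(T_n)=n-2g-2$) realizes the upper bound, and taking $g=0$ together with any connected graph having a cut vertex, more generally the graphs with $\kappa_g(G)=1$ characterized in Section $4$, realizes the lower bound. The main obstacle, such as it is, is purely bookkeeping: making sure the existence of an $R_g$-cutset is justified for the entire range $0\le g\le \lfloor (n-3)/2\rfloor$ via Proposition \ref{pro1-3} rather than silently importing it from Proposition \ref{pro3-1}, whose narrower hypothesis would otherwise leave a gap.
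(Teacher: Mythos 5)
Your lower bound, your counting argument for the upper bound, and your sharpness witnesses are all essentially the paper's: the paper reads the corollary directly off Proposition~\ref{pro3-1} (lower bound via $\kappa_g(G)\geq\kappa_0(G)=\kappa(G)\geq 1$, upper bound via the same vertex count in contrapositive form, sharpness via Theorem~\ref{th4-3} and the case $g=0$). The one place where you deliberately depart from the paper --- and which you present as the main point of care --- is exactly where your argument has a genuine flaw.

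You assert that a minimum $R_g$-cutset ``exists precisely because the hypothesis $0\leq g\leq\left\lfloor\frac{n-3}{2}\right\rfloor$ is the existence range supplied by Proposition~\ref{pro1-3}.'' This inverts that proposition. Proposition~\ref{pro1-3} says: \emph{if} $\kappa_g(G)$ exists, \emph{then} $0\leq g\leq\left\lfloor\frac{n-3}{2}\right\rfloor$; it is a necessary condition on $g$, not a sufficient one, and its converse is false. Concretely, take the star $G=K_{1,n-1}$ with $n\geq 5$ and $g=1$: here $g\leq\left\lfloor\frac{n-3}{2}\right\rfloor$, but every cutset of $G$ must contain the center, and deleting the center (together with any leaves) leaves only isolated vertices, so $G$ has no $R_1$-cutset and $\kappa_1(G)$ is undefined. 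Hence existence cannot be derived from the corollary's hypotheses at all; it must be taken as an implicit standing assumption, which is in effect what the paper does (its own statement is likewise vacuous for the star unless read this way). Your instinct that Proposition~\ref{pro3-1}'s narrower hypothesis $0\leq g\leq\left\lfloor\frac{n-\kappa(G)-2}{2}\right\rfloor$ leaves part of the corollary's range of $g$ uncovered is correct, and your direct count $n\geq|X|+2(g+1)$ does repair that, since it never mentions $\kappa(G)$ --- but only once existence of an $R_g$-cutset is assumed rather than ``proved'' via a false converse of Proposition~\ref{pro1-3}.
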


\begin{proposition}\label{pro2-3}
Let $g$ be a non-negative integer.

$(1)$ If $W_{n} \ (n\geq 5)$ is a wheel of order $n$, then $0\leq
g\leq \lfloor \frac{n-5}{2}\rfloor$ and $\kappa_g(W_{n})=3$.

$(2)$ If $P_{n} \ (n\geq 3)$ be a path of order $n$, then $0\leq
g\leq \lfloor \frac{n-3}{2}\rfloor$ and $\kappa_g(P_{n})=1$.
\end{proposition}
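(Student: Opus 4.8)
The plan is to prove both parts by the same two-step recipe: exhibit an explicit $R_g$-cutset meeting the claimed value, and then match it from below using the inequality $\kappa(G)\le\kappa_g(G)$ of Corollary \ref{cor3-1} together with a direct computation of the ordinary connectivity.

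For $(1)$ I would write $W_n$ as a rim cycle $C_{n-1}$ on $c_1,\dots,c_{n-1}$ plus a hub $h$ adjacent to all $c_i$. To fix the range of $g$, I would first observe that every cutset must contain $h$: if $h$ survives, every remaining rim vertex is still joined to $h$, so the graph stays connected. After deleting $h$ the pieces of $W_n-S$ are subpaths of $C_{n-1}$, and cutting a cycle into at least two arcs needs two further rim deletions; requiring two arcs each of size $\ge g+1$ then gives $(n-1)-2\ge 2(g+1)$, i.e. $g\le\flr{\frac{n-5}{2}}$, as stated. For the upper bound I would take $S=\{h,c_a,c_b\}$ with $c_a,c_b$ splitting the surviving $n-3$ rim vertices into arcs of sizes $\ceil{\frac{n-3}{2}}$ and $\flr{\frac{n-3}{2}}$; since $g\le\flr{\frac{n-5}{2}}$ forces $g+1\le\flr{\frac{n-3}{2}}$, both arcs have at least $g+1$ vertices, so $S$ is an $R_g$-cutset and $\kappa_g(W_n)\le 3$.

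For the lower bound in $(1)$ it is enough to show $\kappa(W_n)=3$, and this is where the one genuine case check lives. A vertex set of size at most two either omits $h$ (and then, as noted, $W_n-S$ is connected), or equals $\{h\}$ or $\{h,c_i\}$; removing $h$ alone leaves the connected cycle $C_{n-1}$, and deleting one further rim vertex leaves a connected path. Hence no set of at most two vertices separates $W_n$, so $\kappa(W_n)=3$, and Corollary \ref{cor3-1} gives $\kappa_g(W_n)\ge 3$, yielding the equality.

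For $(2)$, with $P_n=v_1v_2\cdots v_n$, the range $0\le g\le\flr{\frac{n-3}{2}}$ is immediate from Proposition \ref{pro1-3}, and I would note it is sharp: deleting an internal vertex $v_i$ leaves components of sizes $i-1$ and $n-i$, both $\ge g+1$ exactly when $g+2\le i\le n-g-1$, and such an $i$ exists iff $g\le\flr{\frac{n-3}{2}}$. Taking $i=g+2$ gives an $R_g$-cutset of size one, so $\kappa_g(P_n)\le 1$; since $P_n$ is connected, Corollary \ref{cor3-1} forces $\kappa_g(P_n)\ge 1$, so $\kappa_g(P_n)=1$. The main obstacle overall is the short separation argument for the wheel; the rest reduces to an explicit construction plus the already-established bound $\kappa\le\kappa_g$.
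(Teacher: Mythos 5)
Your proof is correct and takes essentially the same route as the paper: an explicit $R_g$-cutset (hub plus two rim vertices for $W_n$, one internal vertex for $P_n$) gives the upper bound, and the lower bound follows from $\kappa(G)\le\kappa_g(G)$ --- though note this inequality is Proposition~\ref{pro3-1}, not Corollary~\ref{cor3-1} --- together with $\kappa(W_n)=3$ and $\kappa(P_n)=1$. If anything you are more careful than the paper: you actually prove $\kappa(W_n)\ge 3$ and the necessity of $g\le\lfloor\frac{n-5}{2}\rfloor$ rather than asserting them, and your balanced split of the rim repairs a slip in the paper's construction, whose choice $X=\{v,u_1,u_{\lfloor\frac{n-1}{2}\rfloor}\}$ leaves arcs of sizes $\lfloor\frac{n-1}{2}\rfloor-2$ and $\lceil\frac{n-1}{2}\rceil$, which fails for instance when $n=7$ and $g=1$.
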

\begin{proof}
$(1)$ From Proposition \ref{pro3-1}, we have $\kappa_g(W_{n})\geq
\kappa(W_{n})=3$. It suffices to show that $\kappa_g(W_{n})\leq 3$.
Let $v$ be the center of $W_{n}$, and $W_{n}-v=C_{n-1}$, and
$V(C_{n-1})=\{u_1,u_2,\ldots,u_{n-1}\}$. Choose
$X=\{v,u_1,u_{\lfloor\frac{n-1}{2}\rfloor}\}$. Clearly, $n-3\geq
2(g+1)$, that is, $0\leq g\leq \lfloor \frac{n-5}{2}\rfloor$. Since
each component of $W_{n}-X$ has $g+1$ vertices, it follows that
$\kappa_g(W_{n})\leq 3$, and hence $\kappa_g(W_{n})=3$.

$(2)$ From Proposition \ref{pro3-1}, we have $\kappa_g(P_{n})\geq
\kappa(P_{n})=1$. It suffices to show $\kappa_g(P_{n})\leq 1$. Let
$P_n=u_1u_2\ldots u_{n}$. Choose $v=u_{\lceil n/2\rceil}$. Clearly,
$n-1\geq 2(g+1)$, that is, $0\leq g\leq \lfloor
\frac{n-3}{2}\rfloor$. Then each component of $G-v$ has $g+1$
vertices, and hence $\kappa_g(P_{n})\leq 1$. So $\kappa_g(P_{n})=1$.
\end{proof}

In terms of diameter, we can get upper bound of $\kappa_g(G)$.
\begin{proposition}\label{pro3-2}
Let $g$ be a non-negative integer and let $G$ be a connected graph
of order $n$ such that $0\leq g\leq
\left\lfloor\frac{diam(G)}{2}\right\rfloor-1$. Then
$$
\kappa_g(G)\leq n-diam(G).
$$
Moreover, the bound is sharp.
\end{proposition}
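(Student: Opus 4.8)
The plan is to prove $\kappa_g(G)\le n-diam(G)$ by exhibiting an explicit $R_g$-cutset of size at most $n-diam(G)$, built around a diametral path. First I would fix a shortest path $P=v_0v_1\cdots v_d$ realizing $d=diam(G)$, so that $P$ has $d+1$ vertices. Because $P$ is a geodesic, any two vertices $v_i,v_j$ on it with $|i-j|\ge 2$ are non-adjacent in $G$; this is the structural fact that lets the endpoints of $P$ be separated once we remove an appropriate middle block of vertices. The hypothesis $g\le \lfloor diam(G)/2\rfloor-1$ is exactly what guarantees there is enough room on the path to carve out two terminal segments each of length at least $g+1$ while still having vertices left over to delete.

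Concretely, I would let the cutset $S$ consist of all vertices of $G$ \emph{except} for two short end-segments of the geodesic, namely $A=\{v_0,v_1,\dots,v_g\}$ and $B=\{v_{d-g},v_{d-g+1},\dots,v_d\}$. Then $S=V(G)\setminus(A\cup B)$, so
\[
|S| = n - |A| - |B| = n - 2(g+1).
\]
One should check $A$ and $B$ are disjoint and that $G[A]$ and $G[B]$ are each connected with at least $g+1$ vertices: disjointness follows from $g\le \lfloor diam(G)/2\rfloor-1$, which forces $g < d-g$, and connectivity is immediate since consecutive path vertices are adjacent. The key point is that in $G-S$ there is no edge between $A$ and $B$, because any such edge would join some $v_i$ with $i\le g$ to some $v_j$ with $j\ge d-g$, and the geodesic property would then give a path from $v_0$ to $v_d$ of length at most $i + 1 + (d-j) \le g+1+g = 2g+1 < d$ (using $2g+1<d$, again from the range of $g$), contradicting that $d$ is the diameter. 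Hence $A$ and $B$ lie in different components of $G-S$, so $S$ is a cutset and every component has at least $g+1$ vertices, i.e. $S$ is an $R_g$-cutset. This yields $\kappa_g(G)\le |S| = n-2(g+1) = n-2g-2$, which is consistent with Corollary~\ref{cor3-1}, but the bound we actually want is $n-diam(G)$, so the construction must be refined.

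The refinement, and the main obstacle, is to get $n-diam(G)$ rather than merely $n-2g-2$: since $g\le \lfloor diam(G)/2\rfloor - 1$ we have $2g+2\le diam(G)$, so $n-diam(G)\le n-2g-2$ and the naive cutset above is \emph{too large}. To fix this I would instead keep on the path one full end-segment that is as long as possible while leaving the other side intact, i.e. retain the entire subpath on one side so that the component structure absorbs most of the path's vertices. The correct cutset is $S=V(G)\setminus V(P)$ together with nothing else when $V(P)$ already splits appropriately; more precisely I would take $S$ to be $V(G)$ minus the $diam(G)$ vertices that form the two components, so that $|S|=n-diam(G)$, and partition the kept path-vertices into two geodesic arcs each of size $\ge g+1$ whose separation is guaranteed by the geodesic non-adjacency argument above. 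The delicate part is verifying that \emph{every} component of $G-S$ — not just the two containing the path ends — has at least $g+1$ vertices; this is where I expect to lean on choosing the two retained arcs to together exhaust $diam(G)$ vertices and on the inequality $g+1\le \lceil diam(G)/2\rceil$ to size them. For sharpness, I would exhibit a path $P_n$ (or a suitable tree), where $diam(P_n)=n-1$ and $\kappa_g(P_n)=1=n-(n-1)=n-diam(P_n)$, matching the bound and completing the proof.
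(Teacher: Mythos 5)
Your final construction is exactly the paper's proof: fix a diametral geodesic $P$, delete $V(G)\setminus V(P)$ together with one middle vertex of $P$ (total $n-diam(G)$ vertices), and use $g\le\left\lfloor diam(G)/2\right\rfloor-1$ to guarantee that the two remaining subpaths each have at least $g+1$ vertices --- the chord-freeness of a geodesic, which you make explicit, is precisely what the paper uses implicitly when asserting $G-X=P_{d+1}$ --- and your sharpness example $P_n$ is also the paper's. The detour through the weaker $n-2g-2$ cutset is unnecessary but harmless, and the ``delicate part'' you worry about is automatic, since $G-S$ consists of exactly the two arcs.
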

\begin{proof}
Let $diam(G)=d$ and let $P_{d+1}=v_1v_2\ldots v_{d+1}$ be a path
with $diam(P_{d+1})=d$. Let $X=V(G)-V(P_{d+1})$. Then $|X|=n-d-1$
and $G-X=P_{d+1}$. Choose $v=u_{\lceil d/2\rceil}$. Since $0\leq
g\leq \left\lfloor\frac{diam(G)}{2}\right\rfloor-1$, it follows that
each component of $G-X-v$ has $g+1$ vertices, and hence
$\kappa_g(G)\leq |X|+1=n-d-1+1=n-d$.

To show the sharpness of this bound, we consider the path $P_n$.
From Proposition \ref{pro2-3}, we have $\kappa_g(P_{n})=1$ for
$0\leq g\leq \lfloor \frac{n-1}{2}\rfloor-1$. Clearly,
$diam(P_n)=n-1$ and $\kappa_g(P_n)=1=n-diam(P_n)$.
\end{proof}

\section{Graphs with given $g$-extra connectivity}

From Corollary \ref{cor3-1}, for $0\leq g\leq \left\lfloor
\frac{n-3}{2}\right\rfloor$, we have $1\leq \kappa_g(G)\leq n-2g-2$.

\subsection{Graphs with large $g$-extra connectivity}

The following observation is immediate for graphs with
$\kappa_g(G)=n-2g-2$ and $g=0$.
\begin{observation}\label{obs4-2}
Let $n,g$ be two integers with $0\leq g\leq \left\lfloor
\frac{n-3}{2}\right\rfloor$, and let $G$ be a connected graph of
order $n$. Then $\kappa_g(G)=n-2g-2$ and $g=0$ if and only if $G$ is
a graph obtained from $K_n$ by deleting a matching.
\end{observation}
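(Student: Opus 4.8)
The plan is to prove the biconditional by its two implications, reading the left-hand side as the conjunction that the extremal bound $\kappa_g(G)=n-2g-2$ is attained \emph{and} $g=0$; the feature distinguishing this from a pure connectivity characterization is that, in the reverse direction, the value $g=0$ must itself be recovered rather than assumed. I would set up the matching $M:=E(\overline{G})$ throughout, so that $G=K_n-M$ exactly means $\overline{G}$ is a matching (a disjoint union of independent edges and isolated vertices).

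For the forward implication, suppose $\kappa_g(G)=n-2g-2$ together with $g=0$. Substituting $g=0$ gives $\kappa_0(G)=\kappa(G)=n-2$. Since $n-2<n-1$, the graph $G$ is not complete, so $\overline{G}$ has at least one edge and $M$ will be nonempty. By Whitney's inequality $\kappa(G)\le\delta(G)$ we get $\delta(G)\ge n-2$, hence every vertex $v$ satisfies $\deg_{\overline{G}}(v)=(n-1)-\deg_G(v)\le 1$, i.e.\ $\Delta(\overline{G})\le 1$. A graph of maximum degree at most $1$ is precisely a matching, so $\overline{G}$ is a nonempty matching $M$ and $G=K_n-M$.

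For the reverse implication, let $G=K_n-M$ with $M\neq\emptyset$; here I must produce \emph{both} $g=0$ and $\kappa_g(G)=n-2g-2$. The structural heart is the claim that $G$ has no $R_g$-cutset once $g\ge 1$. To see this, let $S$ be any cutset and let $C_1,C_2$ be two distinct components of $G-S$. There are no $G$-edges between $C_1$ and $C_2$, so every pair $(a,b)$ with $a\in C_1,\ b\in C_2$ is a non-edge of $G$ and hence an edge of $M$; because $M$ is a matching, a single vertex $b\in C_2$ cannot be matched to two vertices of $C_1$, which forces $|C_1|\le 1$ or $|C_2|\le 1$. Thus every separation of $G$ strands a single vertex, so no cutset can leave all components of size $\ge g+1$ when $g\ge 1$. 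Consequently $\kappa_g(G)$ is defined only for $g=0$; since the equality $\kappa_g(G)=n-2g-2$ presupposes that $\kappa_g(G)$ exists, it already forces $g=0$. It then remains to verify $\kappa_0(G)=\kappa(G)=n-2$: deleting the $n-2$ vertices outside a matched pair $\{u,v\}$ (with $uv\in M$) leaves the two non-adjacent vertices $u,v$, giving $\kappa(G)\le n-2$, while the matching argument shows any cutset must contain all neighbours of some isolated vertex, of which there are at least $n-2$, giving $\kappa(G)\ge n-2$. Hence $\kappa_0(G)=n-2=n-2g-2$ and the conjunction holds.

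The delicate point — and the one the earlier draft omitted — is precisely the reverse direction's recovery of $g=0$, so I expect the main work to be the matching argument that every separation of $K_n-M$ isolates a single vertex. That one lemma does double duty: it yields the nonexistence of $R_g$-cutsets for $g\ge 1$ (pinning down $g=0$) and simultaneously supplies the lower bound $\kappa(G)\ge n-2$. The forward direction is then routine given Whitney's bound.
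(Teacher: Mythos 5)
Your proposal is correct. Note that the paper offers no argument at all for this statement --- it is introduced with ``The following observation is immediate'' and left unproved --- so there is no written proof to diverge from; your write-up supplies exactly the justification the paper elides. The forward direction (Whitney's bound $\kappa(G)\le\delta(G)$ forcing $\Delta(\overline{G})\le 1$) is surely the intended ``immediate'' argument. What your proof adds beyond that is the one genuinely non-obvious point: in the reverse direction, the clause $g=0$ must be \emph{derived}, and your matching-separation lemma --- that in $K_n$ minus a matching every cutset leaves a component that is a single vertex, since a vertex of one component would otherwise need two matching partners in the other --- does this cleanly, showing $\kappa_g$ is undefined for $g\ge 1$ while simultaneously giving $\kappa(G)\ge n-2$. (Indeed your lemma can be sharpened: both stranded components are singletons forming a matched pair, so every cutset has size exactly $n-2$.) One boundary convention worth stating explicitly, which you handle implicitly by taking $M\neq\emptyset$: if ``deleting a matching'' were allowed to mean the empty matching, the reverse direction would fail for $G=K_n$, where no cutset and hence no value $\kappa_0(G)$ exists; your reading, with $M$ nonempty, is the one under which the observation is true, and it matches the forward direction, which produces a nonempty $M$ because $\kappa_0(G)=n-2$ presupposes a cutset.
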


It seems that it is not easy to characterize graphs with
$\kappa_g(G)=n-2g-2$ for general graph $G$. So we focus our
attention on trees.

Let $T_n^*$ be a tree of order $n$ constructed as follows:
\begin{itemize}
\item[] $(1)$ Let $T'$ and $T''$ be two trees with
$|V(T')|=|V(T'')|=g+1$;

\item[] $(2)$ Let $T_1,T_2,\ldots, T_r$ be trees with
$|V(T_i)|\leq g$ for each $1\leq i\leq r$ and
$\sum_{i=1}^r|V(T_i)|=n-2g-3$;

\item[] $(3)$ Let $T_n^*$ be a tree of order $n$ obtained from the subtrees
$T',T'',T_1,T_2,\ldots, T_r$ by adding a new vertex $v$, and then
adding one edge from $v$ to each $T_i$ and one edge from $v$ to $T'$
and $T''$, respectively, where $r\geq 0$ and $1\leq i\leq r$.
\end{itemize}

\begin{lemma}\label{lem4-1}
Let $n,g$ be two integers with $1\leq g\leq \left\lfloor
\frac{n-3}{2}\right\rfloor$, and let $T_n^*$ be a tree of order $n$.
Then
$$
\kappa_g(T_n^*)=n-2g-2.
$$
\end{lemma}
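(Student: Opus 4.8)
The plan is to prove the equality $\kappa_g(T_n^*)=n-2g-2$ by establishing the upper and lower bounds separately. Since $T_n^*$ is a tree, its structure is rigid: removing any single vertex $v$ disconnects the tree into components corresponding to the subtrees hanging off $v$. The special vertex $v$ introduced in step $(3)$ of the construction is adjacent to $T'$, $T''$, and all the small trees $T_1,\ldots,T_r$, so $T_n^*-v$ has exactly $r+2$ components, of which $T'$ and $T''$ each have $g+1$ vertices and each $T_i$ has at most $g$ vertices.

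For the upper bound $\kappa_g(T_n^*)\le n-2g-2$, I would exhibit an explicit $R_g$-cutset of that size. The idea is to delete $v$ together with the vertices of all the small trees $T_1,\ldots,T_r$. After this deletion only $T'$ and $T''$ survive as the two components, each having exactly $g+1\ge g+1$ vertices, so this is a valid $R_g$-cutset. Its cardinality is $1+\sum_{i=1}^r|V(T_i)| = 1+(n-2g-3) = n-2g-2$, giving the desired bound. (If $r=0$, deleting $v$ alone already splits $T_n^*$ into $T'$ and $T''$, and $n=2g+3$, so again $n-2g-2=1$ works.)

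For the lower bound $\kappa_g(T_n^*)\ge n-2g-2$, which I expect to be the main obstacle, I would argue that any $R_g$-cutset $S$ must be large. The key is the following counting observation: in any tree, an $R_g$-cutset must leave at least two components, each of size $\ge g+1$, so at least $2(g+1)$ vertices survive. Hence $|S|\le n-2(g+1)=n-2g-2$ would be the wrong direction — instead I need that one cannot do \emph{better} than $n-2g-2$, i.e. that one cannot leave more than $2(g+1)$ vertices while still disconnecting. Here the tree structure forces the issue: because each $T_i$ has at most $g$ vertices, a surviving component of size $\ge g+1$ cannot be contained in any single $T_i$, so any component of size $\ge g+1$ must contain vertices from $T'\cup\{v\}$ or $T''\cup\{v\}$ or span several of the $T_i$ through $v$. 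Since $T_n^*$ is a tree, removing a cutset $S$ with $v\notin S$ keeps everything connected through $v$ unless $S$ itself separates, and a careful case analysis on whether $v\in S$ shows the two large components essentially must be (subsets of) $T'$ and $T''$ with everything else deleted.

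Concretely, I would split into the cases $v\in S$ and $v\notin S$. If $v\notin S$, then since $v$ is adjacent to every subtree, all surviving subtree-fragments attached to $v$ lie in one component, so to disconnect we must entirely delete at least one of the hanging subtrees; because $T'$ and $T''$ have $g+1$ vertices and the $T_i$ have at most $g$, leaving two components of size $\ge g+1$ forces deleting all but two of the blocks and then some — a direct vertex count yields $|S|\ge n-2g-2$. If $v\in S$, the components of $T_n^*-S$ refine the components $T',T'',T_1,\ldots,T_r$ of $T_n^*-v$; since each $T_i$ contributes at most $g<g+1$ vertices, every $T_i$ must be fully deleted, and at most one of $T',T''$ can be spared down to size exactly $g+1$, again forcing $|S|\ge 1+\sum_i|V(T_i)| = n-2g-2$. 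Combining the two bounds gives $\kappa_g(T_n^*)=n-2g-2$. The delicate point throughout is verifying that no cleverer cutset can retain three or more large components or split a single $T_i$ advantageously; the size constraint $|V(T_i)|\le g$ is exactly what rules this out, and making that argument airtight via the component-counting in the $v\notin S$ case is where the real work lies.
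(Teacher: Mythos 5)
Your overall skeleton---an explicit cutset for the upper bound, then a case split on whether the cutset contains the hub vertex $v$ for the lower bound---is exactly the paper's strategy (the paper gets the upper bound by citing its general bound $\kappa_g(G)\le n-2g-2$ from Corollary 3.1, but your explicit cutset $\{v\}\cup\bigcup_{i=1}^{r}V(T_i)$ of size $1+(n-2g-3)=n-2g-2$ is equally good). Your treatment of the case $v\in S$ is also essentially correct: once $v$ is deleted, every component of $T_n^*-S$ lies inside a single block $T'$, $T''$ or $T_i$, so no vertex of any $T_i$ can survive, forcing $\{v\}\cup\bigcup_i V(T_i)\subseteq S$ and hence $|S|\ge n-2g-2$. (One small slip there: you say ``at most one of $T',T''$ can be spared,'' when in fact \emph{both} must survive in full for $S$ to be a cutset at all; this does not affect the count.)

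The genuine gap is in the case $v\notin S$. Your pivotal claim, ``to disconnect we must entirely delete at least one of the hanging subtrees,'' is false: deleting the single neighbour of $v$ inside $T'$ already detaches the rest of $T'$ from $v$, and no block has been entirely deleted. Worse, the conclusion you aim for in this case---``a direct vertex count yields $|S|\ge n-2g-2$''---is the wrong target, because there is nothing to count: the correct statement is that \emph{no} $R_g$-cutset avoiding $v$ exists at all. The argument (this is what the paper does) is short: let $C$ be any component of $T_n^*-S$ other than the one containing $v$. Since $C$ is connected in $T_n^*-v$, it lies inside a single block. If that block is some $T_i$, then $|V(C)|\le g$. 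If that block is $T'$ (or $T''$), then $C$ cannot be all of $T'$, for otherwise the edge joining $v$ to $T'$ would survive and place $C$ in $v$'s component; so $C$ is a \emph{proper} subtree of $T'$ and again $|V(C)|\le |V(T')|-1= g$. Hence $S$ leaves a component with at most $g$ vertices and is not an $R_g$-cutset, which reduces everything to your (correct) case $v\in S$. As written, your sketch of the $v\notin S$ case---which you yourself flag as ``where the real work lies''---rests on a false structural claim and would not survive being made airtight; replacing it with the component-trapping argument above completes the proof.
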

\begin{proof}
From Corollary \ref{cor3-1}, we have $\kappa_g(T_n^*)\leq n-2g-2$.
It suffices to show that $\kappa_g(T_n^*)\geq n-2g-2$. We only need
to prove that for any $X\subseteq V(T_n^*)$ with $|X|\leq n-2g-3$,
$T_n^*-X$ is connected, or $T_n^*-X$ is not connected and there
exists a component of $T_n^*-X$ having at most $g$ vertices. If
$T_n^*-X$ is connected, then the result follows. Suppose that
$T_n^*-X$ is not connected. Let $C_1,C_2,\ldots,C_r$ be the
components of $T_n^*-X$. Then $v\in \bigcup_{i=1}^rV(C_i)$ or $v\in
X$.

Suppose $v\in \bigcup_{i=1}^rV(C_i)$. Without loss of generality,
let $v\in V(C_1)$. Suppose $C_2$ is a subtree of $T'$. If $C_2=T'$,
then $v\in V(C_2)$, which contradicts to the fact $v\in V(C_1)$.
Therefore, $C_2$ is a subtree of $T'$ and $|V(C_2)|\leq |V(T')|-1$.
Clearly, $C_2$ has at most $g$ vertices, as desired. The same is
true for the case that $C_2$ is a subtree of $T''$. Suppose $C_2$ is
neither a subtree of $T'$ nor a subtree of $T''$. Since $v\notin X$,
we can assume that $C_2$ is a subtree of $T_i \ (1\leq i\leq r)$.
Clearly, $C_2$ has at most $g$ vertices.

We now assume $v\in X$. Since $|V(T_i)|\leq g$ for each $i \ (1\leq
i\leq r)$, it follows that there exists a vertex $w$ in
$\bigcup_{i=1}^rV(T_i)$ such that $w\notin X$. The connected
component containing $w$ in $G-X$ has at most $g$ vertices.

From the above argument, we have $\kappa_g(T_n^*)\geq n-2g-2$, and
hence $\kappa_g(T_n^*)=n-2g-2$.
\end{proof}

Trees with $\kappa_g(T_n)=n-2g-2$ for general $g \ (0\leq g\leq
\left\lfloor \frac{n-3}{2}\right\rfloor)$ can be characterized.
\begin{theorem}\label{th4-3}
Let $n,g$ be two integers with $0\leq g\leq \left\lfloor
\frac{n-3}{2}\right\rfloor$, and let $T_n$ be a tree of order $n$.
Then $\kappa_g(T_n)=n-2g-2$ if and only if $T_n=T_n^*$.
\end{theorem}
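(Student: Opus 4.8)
The plan is to prove the theorem as a characterization, so the work splits into two implications. The backward direction is already done: Lemma \ref{lem4-1} establishes that $\kappa_g(T_n^*)=n-2g-2$ whenever $1\leq g\leq \lfloor\frac{n-3}{2}\rfloor$, and the case $g=0$ is covered by Observation \ref{obs4-2} (a tree attaining the bound at $g=0$ must be $K_n$ minus a matching, which for a tree forces $n=2$ or a star-like structure consistent with $T_n^*$). So the substantive task is the forward direction: assuming $T_n$ is a tree with $\kappa_g(T_n)=n-2g-2$, I must show $T_n$ has exactly the structure of $T_n^*$, namely that it contains a special vertex $v$ such that after removing $v$ we see two components of size exactly $g+1$ (the roles of $T'$ and $T''$) together with possibly several small components each of size at most $g$.

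First I would argue contrapositively: suppose $T_n$ is \emph{not} of the form $T_n^*$, and produce an $R_g$-cutset of size at most $n-2g-3$, contradicting $\kappa_g(T_n)=n-2g-2$. The key structural handle is that in a tree, deleting a single vertex $v$ of degree $d$ produces exactly $d$ subtrees, namely the components hanging off each neighbor. The definition of $T_n^*$ says precisely that there is a vertex $v$ for which the branches partition into two ``big'' branches of size exactly $g+1$ and all remaining branches of size at most $g$. I would translate ``being $T_n^*$'' into this branch-size condition at some vertex, and then show that if no vertex admits such a partition, one can always find a smaller $R_g$-cutset.

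The main technical step is to choose the cutset cleverly when $T_n\neq T_n^*$. I expect to use a centroid-type or edge-weight argument: pick a vertex $v$ (or an edge) minimizing the maximum branch size, and examine the branch sizes. If some branch has at least $g+1$ vertices while another large piece also has at least $g+1$ vertices, I can delete $v$ together with a carefully chosen set of vertices to carve out two components of size $\geq g+1$ while spending fewer than $n-2g-2$ deletions; the slack comes precisely from the branches that are too large (size $>g+1$) or from having more than two large branches, since each such surplus vertex beyond the tight configuration lets me remove one fewer cut-vertex. Concretely, whenever a branch has size $s\geq g+2$, I can route the cut so that this branch contributes a component of size exactly $g+1$ and the extra $s-(g+1)$ vertices are absorbed on the ``kept'' side, reducing the needed cutset. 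The bookkeeping here — verifying that every component of the constructed cut still has at least $g+1$ vertices while the cutset has size $\le n-2g-3$ — is the delicate part and the principal obstacle.

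The hard part will be handling the two-sided optimality: I must rule out \emph{all} trees outside the family $T_n^*$, not merely exhibit one bad configuration, so I would organize the argument by cases on the degree sequence around an extremal vertex. The cleanest route is to show directly that $\kappa_g(T_n)=n-2g-2$ forces, at the optimal $R_g$-cutset $X$ of size $n-2g-2$, that $T_n-X$ has exactly two components each of size exactly $g+1$ (since $|V(T_n-X)|=2g+2$ leaves no room otherwise, as each of the $\geq 2$ components needs $\geq g+1$ vertices); then I reconstruct $T_n$ from how these two $(g+1)$-vertex subtrees and the cut-vertices reattach, and verify this reconstruction is exactly the defining recipe for $T_n^*$ with $v$ playing the role of the common attachment point. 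This counting observation, $|V(T_n-X)|=n-(n-2g-2)=2g+2=2(g+1)$, is the crux that pins down the extremal structure and makes the reverse-engineering of $T_n^*$ essentially forced.
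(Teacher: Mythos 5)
Your committed route (the last paragraph of your proposal) has the same skeleton as the paper's proof: take a minimum $R_g$-cutset $X$ with $|X|=n-2g-2$ and use the count $|V(T_n)|-|X|=2g+2$ to force $T_n-X$ to consist of exactly two components $C_1,C_2$ with $|V(C_1)|=|V(C_2)|=g+1$. That observation is correct, and it is where the paper starts. The genuine gap is your assertion that this counting makes the reconstruction of $T_n^*$ ``essentially forced.'' It does not. Knowing the shape of $T_n-X$ says nothing about how the $n-2g-2$ deleted vertices are arranged: a priori, $C_1$ and $C_2$ could attach to two different vertices of $X$ joined by a long path, or small subtrees inside $X$ could hang off $C_1$ or $C_2$ rather than off a common hub. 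Such trees are in fact not extremal, but excluding them is precisely the remaining work, and it requires invoking the minimality of $X$ three more times, each time by exhibiting an explicit $R_g$-cutset of size at most $n-2g-3$: (i) some single vertex $v\in X$ must be adjacent to both $C_1$ and $C_2$ --- otherwise, keeping the internal vertices of the unique tree path joining $C_1$ to $C_2$ except the one nearest $C_1$, and deleting everything else outside $C_1\cup C_2$, gives a smaller $R_g$-cutset; (ii) no component $T_i$ of $T_n-V(C_1)-V(C_2)-v$ sends an edge to $C_1\cup C_2$ --- otherwise deleting $V(T_n)-V(C_1)-V(C_2)-\{u_i\}$, where $u_i\in V(T_i)$ is an endpoint of such an edge, gives one of size $n-2g-3$; (iii) every $|V(T_i)|\le g$ --- otherwise, if some $|V(T_j)|\ge g+1$, deleting $\{v\}$ together with all $T_i$ with $i\ne j$ leaves three components of size at least $g+1$ and uses at most $n-3g-3$ vertices. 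These are the paper's Claims 2--4; none of them follows from the counting, and together they are the substance of the forward direction. Your earlier contrapositive/centroid sketch does gesture at arguments of this type (``one can always find a smaller $R_g$-cutset''), but you explicitly abandon that sketch as unresolved bookkeeping, and the route you actually commit to omits these steps entirely, which is why the proposal as it stands does not prove the theorem.

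A secondary, smaller point: your patch of the backward direction at $g=0$ is garbled. A tree that is $K_n$ minus a matching must satisfy $n-1=\binom{n}{2}-|M|$ with $|M|\le\lfloor n/2\rfloor$, which forces $n=3$ (not ``$n=2$ or a star-like structure''); then $T_3=P_3=T_3^*$ and the equivalence is immediate. To be fair, the paper itself simply quotes Lemma \ref{lem4-1}, which is stated only for $g\ge 1$, so noticing that the $g=0$ case needs a separate (easy) check is a point in your favor, even though your resolution of it is not quite right.
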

\begin{proof}
Suppose $T_n=T_n^*$. From Lemma \ref{lem4-1}, we have
$\kappa_g(T_n^*)=n-2g-2$. Conversely, we suppose
$\kappa_g(T_n)=n-2g-2$. Then there exists $X\subseteq V(T_n)$ with
$|X|=n-2g-2$ such that there are two connected components $C_1,C_2$
with $|V(C_1)|=|V(C_2)|=g+1$. Then we have the following claim.

\begin{claim}\label{Clm:2}
There exists a vertex $v\in X$ such that there is an edge from $v$
to each $C_i$, $i=1,2$.
\end{claim}
\begin{proof}
Assume, to the contrary, that for any $v\in X$, there is no edge
from $v$ to $C_1$ or $C_2$. Without loss of generality, we suppose
that there is no edge from $v$. Then there exists a path
$vx_1x_2\ldots x_au$ where $u\in V(C_1)$ and there exists a path
$vy_1y_2\ldots y_bw$ where $w\in V(C_2)$. Clearly, $a\geq 1$ and
$y\geq 0$. Let
$$
Y=V(G)-V(C_1)-V(C_2)-\{x_1,x_2,\ldots, x_a\}-\{y_1,y_2,\ldots,
y_b\}.
$$
Then $|Y|\leq n-2g-3$ and $G-Y$ is not connected and each connected
component of $G-Y$ has at least $g+1$ vertices, which contradicts to
the fact $\kappa_g(T_n)=n-2g-2$.
\end{proof}

From Claim \ref{Clm:2}, there exists a vertex $v\in X$ such that
there is an edge from $v$ to each $C_i$, $i=1,2$. Let
$T_1,T_2,\ldots,T_r$ be the connected components of
$G-V(C_1)-V(C_2)-v$. Then we have the following claim.

\begin{claim}\label{Clm:3}
For each $T_i \ (1\leq i\leq r)$, there is no edges from $T_i$ to
$C_j \ (j=1,2)$.
\end{claim}
\begin{proof}
Assume, to the contrary, that there exists some $T_i$ such that
there exists an edge $u_iv_j$ from $T_i$ to $C_j$, where $u_i\in
V(T_i)$ and $v_j\in V(C_j)$. Let $Z=V(G)-V(C_1)-V(C_2)-u_i$. Then
$|Z|\leq n-2g-3$ and $G-Z$ is not connected and each connected
component of $G-Z$ has at least $g+1$ vertices, which contradicts to
the fact $\kappa_g(T_n)=n-2g-2$.
\end{proof}

From Claim \ref{Clm:3}, for each $T_i \ (1\leq i\leq r)$, there is
an edge from $T_i$ to $v$. Furthermore, we have the following claim.

\begin{claim}\label{Clm:4}
For each $T_i \ (1\leq i\leq r)$, $|T_i|\leq g$.
\end{claim}
\begin{proof}
Assume, to the contrary, that there exists some $T_j$ such that
$|T_j|\geq g+1$. Let $A=\{v\}\cup (\bigcup_{i=2}^rV(T_i))$. Then
$|A|\leq n-3g-3$ and $G-A$ is not connected and each connected
component of $G-A$ has at least $g+1$ vertices, which contradicts to
the fact $\kappa_g(T_n)=n-2g-2$.
\end{proof}

From Claim \ref{Clm:4}, we have $|T_i|\leq g$ for each $T_i \ (1\leq
i\leq r)$. Then $T_n=T_n^*$, as desired.
\end{proof}

\subsection{Graphs with small $g$-extra connectivity}

Graphs with $\kappa_g(G)=1$ can be characterized easily.
\begin{observation}\label{obs4-1}
Let $n,g$ be two integers with $0\leq g\leq \left\lfloor
\frac{n-3}{2}\right\rfloor$, and let $G$ be a connected graph of
order $n$. Then $\kappa_g(G)=1$ if and only if there exists a cut
vertex $v$ in $G$ such that each connected component of $G-v$ has at
least $g+1$ vertices.
\end{observation}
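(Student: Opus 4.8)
The plan is to translate the equality $\kappa_g(G)=1$ directly through the definitions of \emph{$R_g$-cutset} and of $\kappa_g$, since both directions amount to recognizing that a single-vertex $R_g$-cutset is exactly the same object as a cut vertex all of whose components in $G-v$ have at least $g+1$ vertices. Before handling the two implications I would record the baseline lower bound $\kappa_g(G)\ge 1$: because $G$ is connected, the empty set is not a cutset, so no $R_g$-cutset can have cardinality $0$; equivalently this is the lower bound already supplied by Corollary \ref{cor3-1}. Hence whenever $\kappa_g(G)$ is defined we automatically have $\kappa_g(G)\ge 1$, and the content of the statement reduces to deciding when a minimum $R_g$-cutset has size exactly $1$.

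For the forward direction I would assume $\kappa_g(G)=1$. By the definition of $\kappa_g$ there is an $R_g$-cutset $S$ attaining this minimum with $|S|=1$, say $S=\{v\}$. Being a cutset means $G-v$ is disconnected, so $v$ is a cut vertex; being an $R_g$-cutset means every component of $G-S=G-v$ has at least $g+1$ vertices. This is precisely the asserted condition, so nothing further is needed here.

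For the converse I would suppose $v$ is a cut vertex of $G$ such that every component of $G-v$ has at least $g+1$ vertices. Then $\{v\}$ is a cutset whose components all have at least $g+1$ vertices, i.e.\ an $R_g$-cutset, so taking the minimum over all $R_g$-cutsets yields $\kappa_g(G)\le 1$. Combining this with the lower bound $\kappa_g(G)\ge 1$ noted above gives $\kappa_g(G)=1$, completing the equivalence.

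The statement is an Observation and there is essentially no obstacle: the only point requiring a moment's care is the lower bound $\kappa_g(G)\ge 1$, which is exactly where connectivity of $G$ is used, since a disconnected $G$ would admit the empty set as an $R_g$-cutset and could force the value to be $0$. Everything else is a verbatim unfolding of the definitions, and the hypothesis $0\le g\le\left\lfloor\frac{n-3}{2}\right\rfloor$ serves only to guarantee that $\kappa_g(G)$ is defined in the first place.
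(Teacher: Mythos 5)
Your proof is correct and matches the paper's treatment: the paper states this as an Observation with no written proof precisely because it is the verbatim unfolding of the definitions of cutset and $R_g$-cutset that you carry out. Your explicit note that connectivity of $G$ is what forces $\kappa_g(G)\geq 1$ is the only point of substance, and it is handled correctly.
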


We can also characterize graphs with $\kappa_g(G)=2$.
\begin{theorem}\label{th4-1}
Let $n,g$ be two integers with $0\leq g\leq \left\lfloor
\frac{n-3}{2}\right\rfloor$, and let $G$ be a connected graph of
order $n$. Then $\kappa_g(G)=2$ if and only if $G$ satisfies one of
the following conditions.

$(1)$ $\kappa(G)=2$ and there exists a cut vertex set $\{u,v\}$ in
$G$ such that each connected component of $G-\{u,v\}$ has at least
$g+1$ vertices;

$(2)$ $\kappa(G)=1$, and $g\geq 1$,
\begin{itemize}
\item[] and $(a)$ for each cut vertex $u$, there exists a connected component
of $G-u$ having at most $g$ vertices,

\item[] and $(b)$ there exists a cut vertex $v$ such that $G-v$ contains at least
$3$ connected components, where one of the component is an isolated
vertex and each of the other components has at least $g+1$ vertices,
or there are two non-cut vertices $x,y$ such that $G-\{x,y\}$ is not
connected and each connected component has at least $g+1$ vertices.
\end{itemize}
\end{theorem}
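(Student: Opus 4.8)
The plan is to prove Theorem~\ref{th4-1} by establishing both directions of the biconditional, splitting on the value of $\kappa(G)$. Since $\kappa_g(G)=2$ forces a cutset of size $2$, and by Proposition~\ref{pro3-1} we have $\kappa(G)\le \kappa_g(G)=2$, the connectivity $\kappa(G)$ is either $1$ or $2$. These two cases correspond exactly to conditions $(1)$ and $(2)$, so the whole argument is a careful case analysis on whether $G$ has a cut vertex.

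For the \emph{easier direction} I would first verify that each of conditions $(1)$ and $(2)$ implies $\kappa_g(G)=2$. When $(1)$ holds, the exhibited set $\{u,v\}$ is an $R_g$-cutset of size $2$, giving $\kappa_g(G)\le 2$; combined with $\kappa_g(G)\ge\kappa(G)=2$ from Proposition~\ref{pro3-1}, equality follows. When $(2)$ holds, condition $(b)$ directly supplies an $R_g$-cutset of size $2$ (either two of the three components after deleting $v$ are merged by keeping $v$, leaving a $2$-set whose removal isolates the $g+1$-sized pieces, or the pair $\{x,y\}$ works directly), so $\kappa_g(G)\le 2$; and condition $(a)$ is precisely what rules out $\kappa_g(G)=1$ via the characterization in Observation~\ref{obs4-1}, forcing $\kappa_g(G)\ge 2$. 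Here I would lean on Observation~\ref{obs4-1} to convert statements about "no good single cut vertex" into the lower bound $\kappa_g(G)\ge 2$.

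For the \emph{harder direction}, assume $\kappa_g(G)=2$ and let $X=\{a,b\}$ be a minimum $R_g$-cutset. If $\kappa(G)=2$, then $X$ itself is a cut vertex set all of whose components in $G-X$ have at least $g+1$ vertices, so condition $(1)$ holds immediately. The delicate case is $\kappa(G)=1$, where $G$ has a cut vertex but no single-vertex $R_g$-cutset. First I would note that $g\ge 1$ must hold, since for $g=0$ any cut vertex is already an $R_0$-cutset, forcing $\kappa_0(G)=1$. Then condition $(a)$ follows from Observation~\ref{obs4-1}: if some cut vertex $u$ had all components of $G-u$ of size $\ge g+1$, then $\kappa_g(G)=1$, a contradiction. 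Establishing $(b)$ is where I expect the real work to lie: I must show that the structure of $G$ together with the existence of the size-$2$ $R_g$-cutset $\{a,b\}$ forces one of the two listed configurations. I would analyze how $\{a,b\}$ relates to the cut vertices of $G$ --- whether $a,b$ are both cut vertices, one is, or neither is --- and trace through the components of $G-\{a,b\}$ versus $G-v$ for a cut vertex $v$, using the size bounds $g+1$ and the failure of condition $(a)$'s complement to pin down that either deleting a cut vertex yields an isolated vertex alongside large components, or $\{a,b\}$ consists of two non-cut vertices acting as the separator.

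\textbf{The main obstacle} will be the second bullet of condition $(2)(b)$: disentangling the interaction between the minimum $R_g$-cutset $\{a,b\}$ and the cut-vertex structure of $G$ to verify that exactly one of the two disjunctive configurations must occur, and that the isolated-vertex phenomenon in the first configuration genuinely arises from condition $(a)$. I expect this to require tracking which component each of $a$ and $b$ borders and arguing that any attempt to avoid both configurations would either produce a single-vertex $R_g$-cutset (contradicting $\kappa_g(G)=2$ via Observation~\ref{obs4-1}) or violate the size requirement $|V(C_i)|\ge g+1$. The clean separation afforded by Proposition~\ref{pro3-1} and Observation~\ref{obs4-1} should keep the bookkeeping manageable, but the exhaustive subcase enumeration for $(b)$ is the crux.
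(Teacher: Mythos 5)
Your proposal matches the paper's proof essentially step for step: the forward direction by exhibiting the size-$2$ $R_g$-cutset (the cut vertex $v$ together with the isolated vertex, or the pair $\{x,y\}$) plus the lower bound from condition $(a)$ and Proposition~\ref{pro3-1}, and the converse by deducing $\kappa(G)\le 2$, proving $g\ge 1$ and condition $(a)$ when $\kappa(G)=1$, and then splitting on whether the members of the minimum $R_g$-cutset are cut vertices of $G$. The crux you flagged is resolved in the paper exactly by the bookkeeping you describe: when one member, say $x$, is a cut vertex, Facts~\ref{Fact:Fact1} and~\ref{Fact:Fact2} force $G-x$ to have at least three components, the component containing $y$ to be exactly $\{y\}$, and all others to have at least $g+1$ vertices, so your plan is sound and identical in approach.
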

\begin{proof}
Suppose that $G$ satisfies $(1)$ and $(2)$. Suppose that $(1)$
holds. Since $G-\{u,v\}$ has at least $g+1$ vertices, it follows
that $\kappa_g(G)\leq 2$. From Proposition \ref{pro3-1}, we have
$\kappa_g(G)\geq \kappa(G)=2$. Suppose that $(2)$ holds. Since for
each cut vertex $u$, there exists a connected component of $G-u$
having at most $g$ vertices, it follows that $\kappa_g(G)\geq 2$. If
there exists a cut vertex $v$ such that $G-v$ contains at least $3$
connected components, where one of the component is an isolated
vertex, say $u$, and each of the other components has at least $g+1$
vertices, then $G-u-v$ is not connected and each component has at
least $g+1$ vertices, and hence $\kappa_g(G)\leq 2$. If there are
two non-cut vertices $x,y$ such that $G-\{x,y\}$ is not connected
and each connected component has at least $g+1$ vertices, then
$\kappa_g(G)\leq 2$. So we have $\kappa_g(G)=2$.

Conversely, we suppose $\kappa_g(G)=2$. From Proposition
\ref{pro3-1}, we have $\kappa(G)\leq 2$. Suppose $\kappa(G)=2$. If
for each vertex cut set $\{u,v\}$ in $G$, there exists a connected
component of $G-\{u,v\}$ having at most $g$ vertices, then
$\kappa_g(G)\geq 3$, a contradiction. So there exists a vertex cut
set $\{u,v\}$ in $G$ such that each connected component of
$G-\{u,v\}$ has at least $g+1$ vertices, as desired.

Suppose $\kappa(G)=1$. Then we have the following claim.

\begin{claim}\label{Clm:5}
$g\geq 1$.
\end{claim}
\begin{proof}
Assume, to the contrary, that $g=0$. By deleting one cut vertex,
each connected component has at least one vertex, and hence
$\kappa_g(G)=1$, which contradicts to the fact $\kappa_g(G)=2$.
\end{proof}

From Claim \ref{Clm:5}, we have $g\geq 1$. Since $\kappa_g(G)=2$, we
have the following facts.
\begin{fact}\label{Fact:Fact1}
For any cut vertex $v$, there exists a connected component of $G-v$
having at most $g$ vertices.
\end{fact}

\begin{fact}\label{Fact:Fact2}
There exist two vertices $x,y$ in $G$ such that $G-\{x,y\}$ is not
connected and each connected component of $G-\{x,y\}$ has at least
$g+1$ vertices.
\end{fact}

Suppose that one of $x,y$ is a cut vertex of $G$. Without loss of
generality, we assume that $x$ is a cut vertex of $G$. Let
$C_1,C_2,\ldots,C_r$ be the connected components of $G-x$.

\begin{claim}\label{Clm:6}
$r\geq 3$.
\end{claim}
\begin{proof}
Assume, to the contrary, that $r=2$. From Fact \ref{Fact:Fact1},
there exists a connected component of $G-x$, say $C_1$, having at
most $g$ vertices. If $y\notin V(C_1)$, then $C_1$ has at most $g$
vertices in $G-\{x,y\}$, which contradicts to Fact \ref{Fact:Fact2}.
Suppose $y\in V(C_1)$. If $|V(C_1)|=1$, then $G-\{x,y\}$ contains
exactly one connected component, which contradicts to Fact
\ref{Fact:Fact2}. Suppose $|V(C_1)|\geq 2$. Since $C_1$ has at most
$g$ vertices in $G$, it follows that $C_1-y$ has at most $g-1$
vertices in $G-\{x,y\}$, which contradict to Fact \ref{Fact:Fact2}.
\end{proof}

From Fact \ref{Fact:Fact1}, we suppose that $C_1$ has at most $g$
vertices. From Fact \ref{Fact:Fact2}, we have $C_1=\{y\}$ and for
each $i \ (2\leq i\leq r)$, $C_i$ has at least $g+1$ vertices.
Clearly, $(2)$ holds.

Suppose that neither $x$ nor $y$ is a cut vertex of $G$. From Fact
\ref{Fact:Fact2}, $G-\{x,y\}$ is not connected and each connected
component has at least $g+1$ vertices.
\end{proof}

\noindent {\bf Example 4.1.} Let $H_1$ be a graph obtained from
$K_{\lceil \frac{n-2}{2}\rceil}$ and $K_{\lfloor
\frac{n-2}{2}\rfloor}$ by adding two vertices $u,v$ and edges in
$\{uu_1,uu_2,vv_1,vv_2\}$, where $u_1,u_2\in V(K_{\lceil
\frac{n-2}{2}\rceil})$ and $v_1,v_2\in V(K_{\lfloor
\frac{n-2}{2}\rfloor})$. From Theorem \ref{th4-1},
$\kappa(H_1)=\kappa_g(H_1)=2$.\\

\noindent {\bf Example 4.2.} Let $H_2$ be a graph obtained from
$K_{\lceil \frac{n-2}{2}\rceil}$ and $K_{\lfloor
\frac{n-2}{2}\rfloor}$ by adding two vertices $u,v$ and then the
edge $uv$, all the edges from $v$ to $K_{\lceil
\frac{n-2}{2}\rceil}$, and all the edges from $v$ to $K_{\lfloor
\frac{n-2}{2}\rfloor}$. From Theorem \ref{th4-1}, $\kappa(H_2)=1$
and $\kappa_g(H_2)=2$.\\

Furthermore, graphs with $\kappa_g(G)=3$ can be characterized in the
following.
\begin{theorem}\label{th4-3}
Let $n,g$ be two integers with $1\leq g\leq \left\lfloor
\frac{n-5}{2}\right\rfloor$. Let $G$ be a connected graph of order
$n$. Then $\kappa_g(G)=3$ if and only if $G$ satisfies one of the
following conditions.

$(1)$ $\kappa(G)=3$ and there exists a cut vertex set $\{u,v,w\}$ in
$G$ such that each connected component of $G-\{u,v,w\}$ has at least
$g+1$ vertices;

$(2)$ $\kappa(G)=2$, and $(a),(b)$ hold, where
\begin{itemize}
\item[] $(a)$ for any vertex cut set $\{u,v\}$, there exists a connected component
of $G-u-v$ having at most $g$ vertices,

\item[] $(b)$ there exists a vertex cut set $\{u,v\}$ such that $G-u-v$ contains at least
$3$ connected components, where one of the component is an isolated
vertex $x$ and $xu,xv\in E(G)$ and each of the other components has
at least $g+1$ vertices, or there are three vertices $x,y,z$ such
that $G-\{x,y\}$, $G-\{x,z\}$ and $G-\{y,z\}$ are connected and
$G-x-y-z$ is not connected and each connected component of $G-x-y-z$
has at least $g+1$ vertices.
\end{itemize}

$(3)$ $\kappa(G)=1$, and $g\geq 2$, and one element in
$\{(c)(d)(e),(c)(d)(f),(c)(d)(g),(c)(h)\}$ holds, where
\begin{itemize}
\item[] $(c)$ For each cut vertex $v$, there exists a connected component
of $G-v$ having at most $g$ vertices.

\item[] $(d)$ For any two vertices $x,y$, if $G-x-y$ is not connected, then there
exists a connected component having at most $g$ vertices.

\item[] $(e)$ There exists a cut vertex $v$ such that $G-v$ contains at least
$4$ connected components, and two of components has exactly one
isolated vertex, and each of other components has at least $g+1$
vertices.

\item[] $(f)$ There exists a cut vertex $v$ such that $G-v$ contains at least
$3$ connected components, and one of components has exactly two
vertices, and each of other components has at least $g+1$ vertices.

\item[] $(g)$ There exit two non-cut vertices $x,y$ such that $G-x-y$
contains at least $3$ connected components and one of connected
components has exactly one isolated vertex and each of other
components contains at least $g+1$ vertices.

\item[] $(h)$ There are three vertices $x,y,z$ such
that $G-\{x,y\}$, $G-\{x,z\}$ and $G-\{y,z\}$ are connected and
$G-x-y-z$ is not connected and each connected component of $G-x-y-z$
has at least $g+1$ vertices.
\end{itemize}
\end{theorem}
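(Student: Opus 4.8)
The plan is to prove the equivalence by conditioning on $\kappa(G)$, mirroring Theorem \ref{th4-1} with one extra layer of case analysis. I first invoke Proposition \ref{pro3-1}, which forces $\kappa(G)\le\kappa_g(G)=3$, so that $\kappa(G)\in\{1,2,3\}$ matches cases $(1),(2),(3)$. Throughout I use that a $k$-element set $S$ is an $R_g$-cutset exactly when $G-S$ is disconnected with every component of order at least $g+1$, and that when $\kappa(G)=1$ the only single vertices whose deletion disconnects $G$ are cut vertices. For $\kappa(G)=3$ the argument copies the $\kappa(G)=2$ branch of Theorem \ref{th4-1}: for sufficiency, $(1)$ hands us a $3$-element $R_g$-cutset so $\kappa_g(G)\le 3$, while Proposition \ref{pro3-1} gives $\kappa_g(G)\ge\kappa(G)=3$; for necessity, any $3$-element $R_g$-cutset is a minimum vertex cut (since $\kappa(G)=3$) all of whose components have order at least $g+1$, which is exactly $(1)$.

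For $\kappa(G)=2$ I would follow the $\kappa(G)=1$ branch of Theorem \ref{th4-1}. Since $\kappa_g(G)=3>2=\kappa(G)$, every $2$-cut leaves a component of order at most $g$, which is condition $(a)$ and yields $\kappa_g(G)\ge 3$. For the remaining structure I take a $3$-element $R_g$-cutset and split on whether two of its vertices already form a cut of $G$. If they do, then $(a)$ gives a small component of $G$ minus that pair, and forcing this small component to vanish upon deleting the third vertex pins it down to a single vertex adjacent to both cut vertices, that is, the isolated-vertex configuration in $(b)$; if no pair of the triple separates $G$, the three vertices are pairwise non-separating and give the $\{x,y,z\}$ configuration in $(b)$. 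Sufficiency then just reads off the explicit size-$3$ cutset from whichever branch of $(b)$ holds.

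The case $\kappa(G)=1$ carries the real work. I first prove, in the spirit of Claim \ref{Clm:5}, that $\kappa_g(G)=3$ with $\kappa(G)=1$ forces $g\ge 2$: if $g=1$, then a cut vertex together with the order-at-most-$g$ component it must produce can be assembled into an $R_g$-cutset of size at most $2$, contradicting $\kappa_g(G)=3$. I then note that $\kappa_g(G)=3$ makes $(c)$ and $(d)$ automatic, since they state precisely that no single vertex and no vertex pair is an $R_g$-cutset; hence both hold and together deliver the lower bound $\kappa_g(G)\ge 3$.

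The heart of the proof, and the step I expect to be the main obstacle, is classifying a fixed $3$-element $R_g$-cutset $S$ by how it meets the cut vertices of $G$. If $S$ contains a cut vertex $v$, then $(c)$ forces a small component of $G-v$, and counting how the two remaining vertices of $S$ sit among the components of $G-v$ separates the two-isolated-vertices pattern $(e)$ from the single-order-two-component pattern $(f)$; if $S$ contains no cut vertex, one obtains either a non-cut pair that isolates a vertex, giving $(g)$, or a pairwise-non-separating triple, giving $(h)$. The delicate bookkeeping is reconciling which auxiliary hypothesis accompanies each pattern: the patterns $(e),(f),(g)$ carry $(d)$ along for the lower bound, whereas the irreducible triple $(h)$ records its minimality through the connectivity of $G-\{x,y\}$, $G-\{x,z\}$, $G-\{y,z\}$, and here I must verify that $(c)$, $(h)$, and $g\ge 2$ together still exclude every $2$-element $R_g$-cutset, so that $\kappa_g(G)\ge 3$ survives without separately invoking $(d)$. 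For sufficiency in each branch the upper bound $\kappa_g(G)\le 3$ then follows by exhibiting the cutset directly: $v$ with the two isolated vertices in $(e)$, $v$ with the order-two component in $(f)$, the pair $x,y$ with the isolated vertex in $(g)$, and $\{x,y,z\}$ itself in $(h)$.
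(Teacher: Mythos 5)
Your overall architecture is the same as the paper's: reduce to $\kappa(G)\in\{1,2,3\}$ via Proposition \ref{pro3-1}, settle cases $(1)$ and $(2)$ by exhibiting/analysing a $3$-element $R_g$-cutset, and in case $(3)$ classify a fixed $3$-element $R_g$-cutset by how it meets the cut vertices of $G$; your treatment of $(1)$ and $(2)$ is fine and matches the paper. The genuine gaps are the two steps you add or defer in case $(3)$, and neither can be repaired. First, your argument that $\kappa(G)=1$ and $\kappa_g(G)=3$ force $g\geq 2$ does not work: from a cut vertex $v$ and a component of $G-v$ of order at most $g$ you cannot in general assemble an $R_g$-cutset of size at most $2$, because after adding that component's vertex to the deleted set the remaining graph may be connected (if $G-v$ had only two components), or may still contain another component of order at most $g$. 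In fact the implication itself is false: let $G$ consist of two disjoint copies of $K_5$ and two pendant vertices $x_1,x_2$, all joined to one vertex $v$ (so $v$ is universal and $n=13$). Then $\kappa(G)=1$ and, with $g=1$, one checks $\kappa_1(G)=3$: every cutset of size at most $2$ contains $v$ and leaves an isolated vertex, while $\{v,x_1,x_2\}$ is an $R_1$-cutset. This graph realizes pattern $(e)$ with $g=1$, so no proof of ``$g\geq 2$'' exists; the paper's own proof silently never establishes it.

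Second, the verification you explicitly defer --- that $(c)$, $(h)$ and $g\geq 2$ alone exclude every $2$-element $R_g$-cutset --- is impossible. Take $g=2$ and build $G$ from four copies of $K_4$, say $A,B,C,D$: join two new vertices $a,b$ to all of $A\cup B$, join three new vertices $x,y,z$ to all of $C\cup D$, add all edges between $B$ and $C$, and attach a pendant vertex $p$ to a vertex $c_1\in C$. Then $\kappa(G)=1$; condition $(c)$ holds because the only cut vertex is $c_1$ and $G-c_1$ has the component $\{p\}$; condition $(h)$ holds for the triple $x,y,z$, since each pair leaves $G$ connected while $G-x-y-z$ isolates $D$ (a $K_4$, so of order at least $g+1$). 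Yet $\{a,b\}$ is a $2$-element $R_2$-cutset separating $A$ from the rest, so $\kappa_2(G)=2$, not $3$. Hence the alternative $(c)(h)$ in condition $(3)$ is not sufficient, and your promised verification cannot be carried out. You were right to single this point out: the paper's sufficiency argument hides the problem by invoking $(d)$ even though $(d)$ is not part of the $(c)(h)$ alternative. In short, your proposal reproduces the paper's proof where that proof is sound, but the two places where you go beyond it --- proving $g\geq 2$, and closing the lower bound in the $(c)(h)$ branch --- are precisely the places where the stated theorem itself fails, so no completion of your outline is possible without first amending the statement (e.g., deleting ``$g\geq 2$'' from $(3)$ and requiring $(d)$ alongside $(c)(h)$).
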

\begin{proof}
Suppose $\kappa_g(G)=3$. From Proposition \ref{pro3-1}, we have
$\kappa(G)\leq 3$. Suppose $\kappa(G)=3$. Then
$\kappa_g(G)=\kappa(G)=3$, and there there exists a vertex cut set
$\{u,v,w\}$ in $G$ such that each connected component of
$G-\{u,v,w\}$ has at least $g+1$ vertices.

Suppose $\kappa(G)=2$. If there exists a vertex cut set $\{u,v\}$
such that any connected component of $G-u-v$ has at least $g+1$
vertices, then $\kappa_g(G)\leq 2$, which contradicts to the fact
$\kappa_g(G)=3$. So for any cut vertex set $\{u,v\}$, there exists a
connected component of $G-u-v$ having at most $g$ vertices, i.e.,
$(a)$ holds. Since $\kappa_g(G)=3$, it follows that there there
exists a vertex cut set $\{u,v,w\}$ in $G$ such that each connected
component of $G-u-v-w$ has at least $g+1$ vertices. If $G-u-v$,
$G-w-v$, $G-u-w$ are all connected, then there are three vertices
$x,y,z$ such that $G-\{x,y\}$, $G-\{x,z\}$ and $G-\{y,z\}$ are
connected and $G-x-y-z$ is not connected and each connected
component of $G-x-y-z$ has at least $g+1$ vertices. If $G-u-v$ or
$G-w-v$ or $G-u-w$ is not connected, then we suppose that $G-u-v$ is
not connected, and hence $\{u,v\}$ is a cut vertex set. From $(a)$,
there exists a connected component of $G-u-v$, say $C_1$, having at
most $g$ vertices. Let $C_2,C_3,\ldots, C_r$ are other components of
$G-u-v$. We claim that each $C_i \ (2\leq i\leq r)$ contains at
least $g+1$ vertices. Assume, to the contrary, that $C_2$ contains
at most $g$ vertices. Then there is a connected component of
$G-u-v-w$ having at most $g$ vertices, which contradicts to the fact
$\kappa_g(G)=3$. Therefore, $C_i \ (2\leq i\leq r)$ contains at
least $g+1$ vertices. We claim that $|V(C_1)|=1$. Assume, to the
contrary, that $|V(C_1)|\geq 2$. Then there there is a connected
component having at most $g$ vertices in $G-\{u,v,w\}$, which
contradicts to the fact $\kappa_g(G)=3$. So $(b)$ holds.

Suppose $\kappa(G)=1$. Since $\kappa_g(G)=3$, it follows that $(c)$
and $(d)$ hold, and there exists a vertex cut set $\{u,v,w\}$ such
that each connected component of $G-\{u,v,w\}$ has at least $g+1$
vertices.

\begin{claim}\label{Clm:7}
At most two of $u,v,w$ are cut vertices in $G$.
\end{claim}
\begin{proof}
Assume, to the contrary, that $u,v,w$ are all cut vertices in $G$.
From $(c)$, there exists a connected component of $G-u$, say $C_1$,
having at most $g$ vertices. If $|V(C_1)|=1$, then $V(C_1)\neq
\{v\}$ or $V(C_1)\neq \{w\}$ since $v,w$ are cut vertices. Then
$C_1$ is an isolated vertex in $G-\{u,v,w\}$, a contradiction. If
$|V(C_1)|\geq 3$, then $C_1$ contains at most $g$ vertices
$G-\{u,v,w\}$. Clearly, $|V(C_1)|=2$ and $V(C_1)=\{v,w\}$, and one
of $v,w$ is not cut vertex of $G$, a contradiction.
\end{proof}

From Claim \ref{Clm:7}, at most two of $u,v,w$ are cut vertices in
$G$. Suppose that one of $u,v,w$, say $v$, is cut vertex in $G$.
From $(c)$, there exists a connected component of $G-v$, say $C_1$,
having at most $g$ vertices. Let $C_2,C_3,\ldots, C_r$ are other
components of $G-v$. Clearly, $|V(C_1)|\leq 2$. If $|V(C_1)|=2$,
then $(f)$ holds. If $|V(C_1)|=1$, then $V(C_1)=\{u\}$ or
$V(C_1)=\{w\}$. Without loss of generality, let $V(C_1)=\{u\}$. From
$(d)$, there exists a connected component of $G-u-v$ having at most
$g$ vertices, say $C_2$. Clearly, $V(C_2)=\{w\}$. Then $(e)$ holds.

Suppose that $u,v,w$ are all non-cut vertices in $G$. Suppose one of
$G-u-v$, $G-w-v$, $G-u-w$ is not connected, say $G-u-v$ is not
connected. Let $C_1,C_2,\ldots, C_r$ are connected components of
$G-u-v$. From $(d)$, there exists some connected component, say
$C_1$, such that $|V(C_1)|\leq g$. Clearly, $V(C_1)=\{w\}$. For each
$C_i \ (2\leq i\leq r)$, we have $|V(C_i)|\geq g+1$. Then $(g)$
holds. Suppose $G-u-v$, $G-w-v$, $G-u-w$ are all connected. Since
$G-\{x,y,z\}$ is not connected and each connected component of
$G-\{x,y,z\}$ has at least $g+1$ vertices, it follows that $(h)$
holds.

Conversely, we suppose that $(1)$ or $(2)$ or $(3)$ holds. Suppose
that $(1)$ holds. Since there exists a cut vertex set $\{u,v,w\}$ in
$G$ such that each connected component of $G-\{u,v,w\}$ has at least
$g+1$ vertices, it follows that $\kappa_g(G)\leq 3$. Since
$\kappa(G)=3$, it follows that $\kappa_g(G)\geq \kappa(G)=3$, and
hence $\kappa_g(G)=3$. Suppose that $(2)$ holds. From $(a)$, we have
$\kappa_g(G)\geq 3$. From $(b)$, we have $\kappa_g(G)\leq 3$, and
hence $\kappa_g(G)=3$. Suppose that $(3)$ holds. From $(c)$ and
$(d)$, we have $\kappa_g(G)\geq 3$. From $(e)$ or $(f)$ or $(g)$ or
$(h)$, we have $\kappa_g(G)\leq 3$, and hence $\kappa_g(G)=3$.
\end{proof}

\noindent {\bf Example 4.3.} Let $H_3$ be a graph obtained from
$K_{\lceil \frac{n-3}{2}\rceil}$ and $K_{\lfloor
\frac{n-3}{2}\rfloor}$ by adding two vertices $u,v,w$ and edges in
$\{uu_1,uu_2,uu_3,vv_1,vv_2,vv_3\}$, where $u_1,u_2,u_3\in
V(K_{\lceil \frac{n-3}{2}\rceil})$ and $v_1,v_2,v_3\in V(K_{\lfloor
\frac{n-3}{2}\rfloor})$. From Theorem \ref{th4-3},
$\kappa(H_3)=\kappa_g(H_3)=3$.\\

\noindent {\bf Example 4.4.} Let $H_4$ be a graph obtained from
$K_{\lceil \frac{n-3}{2}\rceil}$ and $K_{\lfloor
\frac{n-3}{2}\rfloor}$ by adding three vertices $u,v,x$ and then the
edges $ux,vx$, all the edges from $\{u,v\}$ to $K_{\lceil
\frac{n-3}{2}\rceil}$, and all the edges from $\{u,v\}$ to
$K_{\lfloor \frac{n-3}{2}\rfloor}$. From Theorem \ref{th4-3},
$\kappa(H_4)=2$
and $\kappa_g(H_4)=3$.\\

\noindent {\bf Example 4.5.} Let $H_5$ be a graph obtained from
$K_{\lceil \frac{n-3}{2}\rceil}$ and $K_{\lfloor
\frac{n-3}{2}\rfloor}$ by adding three vertices $v,x,y$ and then the
edges $vx,vy$, all the edges from $v$ to $K_{\lceil
\frac{n-3}{2}\rceil}$, and all the edges from $v$ to $K_{\lfloor
\frac{n-3}{2}\rfloor}$. From Theorem \ref{th4-3}, $\kappa(H_5)=1$
and $\kappa_g(H_4)=3$.

\section{Extremal problems}

We now consider the three extremal problems that we stated in the
Introduction.

Let $T_n'$ be a tree of order $n$ constructed as follows.

$(1)$ Let $K_{1,g}^1,K_{1,g}^2,\ldots,K_{1,g}^{r},K_{1,x}^{r+1}$ be
$(r+1)$ stars with centers $v_1,v_2,\ldots,v_{r+1}$, where $g+1\leq
x\leq 2g$ and $n-k=(g+1)r+x+1$;
\begin{figure}[!hbpt]
\begin{center}
\includegraphics[scale=0.7]{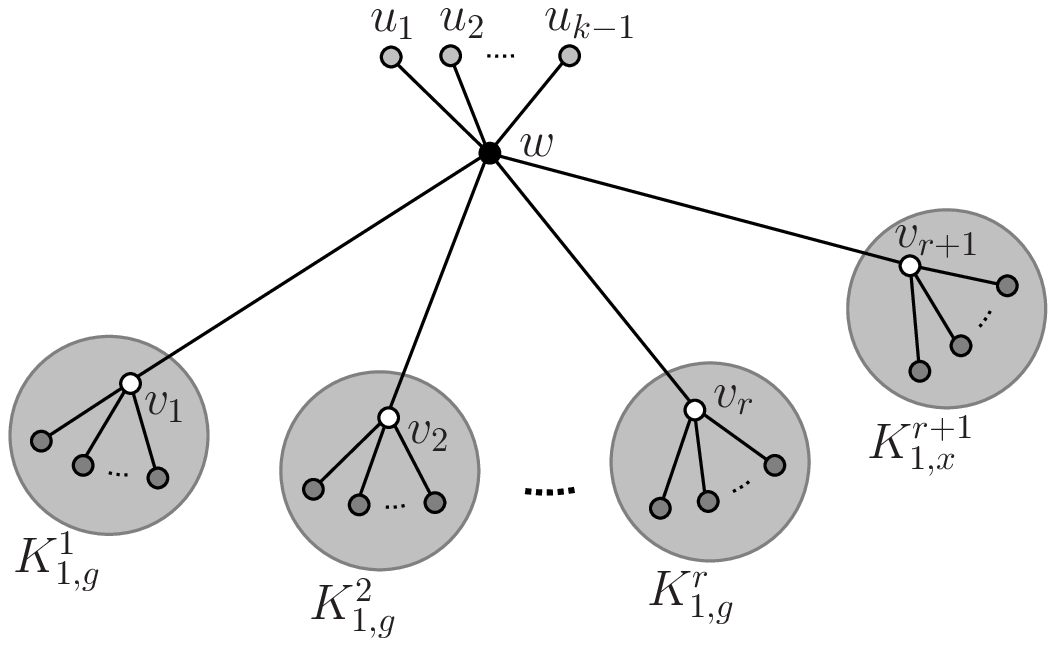}
\end{center}
\begin{center}
Figure 2: Tree $T_n'$.
\end{center}\label{fig1}
\end{figure}

$(2)$ Let $T_n'$ be a tree of order $n$ obtained from the stars
$K_{1,g}^1,K_{1,g}^2,\ldots,K_{1,g}^{r},K_{1,x}^{r+1}$ and the
vertices $w,u_1,u_2,\ldots,u_{k-1}$ by adding edges in
$\{wu_i\,|\,1\leq i\leq k-1\}\cup \{wv_i\,|\,1\leq i\leq r+1\}$; see
Figure $2$.

\begin{lemma}\label{lem5-1}
Let $n,g,k$ be three integers with $1\leq g\leq \left\lfloor
\frac{n-k-2}{2}\right\rfloor$. Then
$$
\kappa_g(T_n')=k.
$$
\end{lemma}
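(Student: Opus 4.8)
The plan is to show both inequalities $\kappa_g(T_n')\le k$ and $\kappa_g(T_n')\ge k$. For the upper bound, I would exhibit an explicit $R_g$-cutset of size $k$. The natural candidate is the set $S=\{w,u_1,u_2,\ldots,u_{k-1}\}$ consisting of the hub vertex $w$ together with the $k-1$ pendant-type vertices attached to it. Removing $S$ disconnects $T_n'$ into the components obtained from the stars $K_{1,g}^1,\ldots,K_{1,g}^r$ and $K_{1,x}^{r+1}$ (once their connection through $w$ is severed). Each star $K_{1,g}^i$ contributes a component on $g+1$ vertices, and $K_{1,x}^{r+1}$ contributes a component on $x+1\ge g+2$ vertices; since $x\ge g+1$, every component has at least $g+1$ vertices. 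Hence $S$ is an $R_g$-cutset and $\kappa_g(T_n')\le |S|=k$.

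For the lower bound $\kappa_g(T_n')\ge k$, I would argue that no vertex set $X$ with $|X|\le k-1$ can be an $R_g$-cutset. The strategy mirrors the proof of Lemma \ref{lem4-1}: I would show that for any such $X$, either $T_n'-X$ is connected, or it has a component with at most $g$ vertices. The key structural observation is that the only way to separate the tree into pieces all of size $\ge g+1$ is to isolate whole stars, and each star can be detached from the rest only by deleting its center $v_i$ or the hub $w$. I would analyze cases according to whether $w\in X$. If $w\notin X$, then $w$ lies in some component $C$; any leaf-only or center-vertex deletion among the stars that does manage to split off a piece leaves a piece that is a proper subtree of a single star $K_{1,g}^i$, hence of size at most $g$. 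If $w\in X$, then one vertex of $X$ is already ``used up'' on the hub, leaving at most $k-2$ vertices to cut the $u_i$'s and the star centers; since there are $k-1$ vertices $u_i$ plus $r+1$ star centers to deal with, a counting argument shows that either some $u_i$ (an isolated vertex of size $1\le g$ since $g\ge 1$) survives as a small component, or some star is only partially cut, again yielding a component of size at most $g$.

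The main obstacle I expect is the bookkeeping in the $w\in X$ case: I must verify carefully that with fewer than $k$ deletions one cannot simultaneously (i) absorb or cut away all $k-1$ pendant vertices $u_i$, and (ii) cleanly separate all the stars without leaving either a small star-fragment or an isolated $u_i$. The cleanest route is to count: to prevent any $u_i$ from becoming a small surviving component while $w$ is deleted, each $u_i$ must itself lie in $X$, costing $k-1$ deletions, which together with $w$ already exhausts $|X|=k$; thus with $|X|\le k-1$ some $u_i\notin X$ becomes an isolated vertex (size $1\le g$), contradicting the $R_g$ condition. I would make the dependence on the hypothesis $1\le g$ explicit here, since it guarantees $1\le g$ so that a surviving isolated $u_i$ is indeed a ``too small'' component. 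Combining the two bounds gives $\kappa_g(T_n')=k$, completing the proof.
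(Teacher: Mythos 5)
Your proposal is correct and follows essentially the same route as the paper: the same explicit $R_g$-cutset $\{w,u_1,\ldots,u_{k-1}\}$ for the upper bound, and for the lower bound the same case split on whether $w\in X$, concluding with the same counting argument that $w\in X$ forces some $u_j\notin X$ to survive as an isolated (hence too small, since $g\geq 1$) component. Your handling of the $w\notin X$ case (components avoiding $w$ are proper subtrees of single stars) is in fact a slightly more direct observation than the paper's iterative ``continue this process'' argument, but the underlying idea is identical.
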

\begin{proof}
Choose $X=\{w\}\cup \{u_1,u_2,\ldots,u_{k-1}\}$. Clearly, $G-X$ is
not connected and each connected component of $G-X$ has at least
$g+1$ vertices, and hence $\kappa_g(T_n')\leq k$. It suffices to
show $\kappa_g(T_n')\geq k$. We only need to prove that for any
$X\subseteq V(T_n')$ with $|X|\leq k-1$, if $T_n'-X$ is not
connected, then there is a connected component of $T_n'-X$ having at
most $g$ vertices. Then we have the following claim.

\begin{claim}\label{Clm:8}
$w\in X$.
\end{claim}
\begin{proof}
Assume, to the contrary, that $w\notin X$. Since $T_n'-X$ is not
connected, it follows that there exists some center, say $v_1$, such
that $v_1\in X$. If there is an isolated vertex of $K_{1,g}^{1}-v_1$
in $G-X$, then we are done. Then all vertices of $K_{1,g}^{1}-v_1$
are not isolated vertices in $G-X$, and hence
$V(K_{1,g}^{1})\subseteq X$. Since $T_n'-X$ is not connected, it
follows that there exists some center, say $v_2$, such that $v_2\in
X-v_1$. If there is an isolated vertex of $K_{1,g}^{2}-v_2$ in
$G-X$, then we are done. We assume that $V(K_{1,g}^{2})\subseteq X$.
Continue this process, we have
$(\bigcup_{i=1}^{r+1}V(K_{1,g}^{i}))\subseteq X$. Clearly, $G-X$ is
connected, a contradiction.
\end{proof}

From Claim \ref{Clm:8}, we have $w\in X$. Since $|X|\leq k-1$, it
follows that there exists some $u_j$ such that $u_j\notin X$, and
hence $u_j$ is an isolated vertex in $G-X$. So there is a connected
component of $T_n'-X$ having at most $g$ vertices, and hence
$\kappa_g(T_n')\geq k$. So we have $\kappa_g(T_n')=k$.
\end{proof}

\begin{proposition}\label{pro5-1}
Let $n,g,k$ be three integers with $1\leq g\leq \left\lfloor
\frac{n-k-2}{2}\right\rfloor$ and $1\leq k\leq n-2g-2$. Then
$$
s(n,k)=n-1.
$$
\end{proposition}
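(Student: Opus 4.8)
The plan is to establish the two inequalities $s(n,k)\ge n-1$ and $s(n,k)\le n-1$ separately, the first by a connectivity argument and the second by exhibiting the tree $T_n'$.

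For the lower bound I would argue that every $G\in\mathscr{G}(n,k)$ is connected. If $G$ were disconnected yet still possessed an $R_g$-cutset, then the empty set would itself be an $R_g$-cutset, forcing $\kappa_g(G)=0$; since $k\ge 1$ this is impossible, so $G$ must be connected. A connected graph on $n$ vertices has at least $n-1$ edges, hence $|E(G)|\ge n-1$ for every $G\in\mathscr{G}(n,k)$, and therefore $s(n,k)\ge n-1$.

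For the upper bound I would take the tree $T_n'$ defined just before the statement as the extremal graph. By Lemma \ref{lem5-1} we have $\kappa_g(T_n')=k$ under the hypothesis $1\le g\le\left\lfloor \frac{n-k-2}{2}\right\rfloor$, so $T_n'\in\mathscr{G}(n,k)$. Counting vertices, the $r$ stars $K_{1,g}^i$ contribute $(g+1)r$ vertices, the star $K_{1,x}^{r+1}$ contributes $x+1$ vertices, and the vertices $w,u_1,\dots,u_{k-1}$ contribute $k$ vertices; by the defining relation $n-k=(g+1)r+x+1$ these total exactly $n$. As a tree on $n$ vertices, $T_n'$ has precisely $n-1$ edges, so $s(n,k)\le|E(T_n')|=n-1$. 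Combining the two bounds gives $s(n,k)=n-1$.

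The substantive content of the argument is carried by Lemma \ref{lem5-1}; the remaining work is the vertex/edge bookkeeping above. The one point that deserves care is confirming that the construction of $T_n'$ is actually realizable for the given $n,g,k$, i.e.\ that one can choose an integer $r\ge 0$ together with an admissible $x$ so that $n-k=(g+1)r+x+1$ holds. I expect this feasibility check---a division-with-remainder argument on $n-k-1$ modulo $g+1$, using $1\le k\le n-2g-2$ to guarantee $n-k-1\ge 2g+1$---to be the only mild obstacle, with the divisible case $(g+1)\mid(n-k)$ handled by taking all stars equal to $K_{1,g}$.
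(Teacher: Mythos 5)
Your upper-bound half is exactly the paper's argument (exhibit $T_n'$, invoke Lemma \ref{lem5-1}, count $n-1$ edges), and your bookkeeping there is correct. The genuine gap is in your lower bound. The inference ``if $G$ were disconnected yet still possessed an $R_g$-cutset, then the empty set would itself be an $R_g$-cutset'' is false for $g\geq 1$: the empty set is indeed a cutset of a disconnected $G$, but for it to be an $R_g$-cutset \emph{every component of $G$ itself} must have at least $g+1$ vertices, and nothing guarantees that. Concretely, take $G=K_1\cup H$ (disjoint union), where $H$ is connected and has an $R_g$-cutset $S_H$; then $\{v\}\cup S_H$ (with $v$ the isolated vertex) is an $R_g$-cutset of $G$, while $\emptyset$ is not, since the component $\{v\}$ has one vertex. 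So $\kappa_g(G)=1+\kappa_g(H)\geq 1$ even though $G$ is disconnected: your claimed lemma ``every $G\in\mathscr{G}(n,k)$ with $k\geq 1$ is connected'' is not merely unproved, it is false under a literal reading of Problem 1. Worse, without a connectivity convention the proposition itself fails: for $n=2g+3$ and $k=1$ (admissible values), the disconnected graph $K_1\cup 2P_{g+1}$ has $\kappa_g=1$ and only $2g=n-3$ edges. The bound $s(n,k)\geq n-1$ therefore cannot be derived from the definition of $\kappa_g$; it holds only because $\mathscr{G}(n,k)$ is (implicitly) restricted to connected graphs, which is precisely how the paper's proof disposes of this direction: ``Since we consider only connected graphs, we have $s(n,k)\geq n-1$.'' You must state and use that convention rather than attempt to prove it.

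Your closing remark that realizability of $T_n'$ needs checking is well taken---the paper is silent on this---but your sketch does not settle it, and as stated it cannot. With $r$ stars of order $g+1$ and one star $K_{1,x}$ where $g+1\leq x\leq 2g$, the quantity $n-k-1=(g+1)r+x$ can only attain residues $0,1,\ldots,g-1$ modulo $g+1$, never residue $g$ (for instance $n=7$, $k=1$, $g=1$ would force $2r+2=5$); moreover the boundary case $k=n-2g-2$, i.e.\ $n-k-1=2g+1$, cannot be split into at least two parts of size at least $g+1$ at all. So a complete proof must modify the construction (e.g.\ allow the exceptional star to be $K_{1,g}$ as well, and handle $k=n-2g-2$ via the tree $T_n^*$ of Lemma \ref{lem4-1}, which also has $n-1$ edges and $\kappa_g=n-2g-2$). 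This particular defect is inherited from the paper's definition of $T_n'$ and Lemma \ref{lem5-1}, so it is not yours alone, but calling it a ``mild obstacle'' handled by division with remainder is too optimistic.
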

\begin{proof}
Let $G=T_n'$. From Lemma \ref{lem5-1}, we have $\kappa_g(T_n')=k$,
and hence $s(n,k)\leq n-1$. Since we consider only connected graphs,
we have $s(n,k)\geq n-1$, and hence $s(n,k)=n-1$.
\end{proof}

\begin{lemma}\label{lem5-2}
Let $n,g,k$ be three integers with $1\leq g\leq \left\lfloor
\frac{n-k-2}{2}\right\rfloor$. Let $H_k$ be the graph obtained from
three cliques $K_{n-k-g},K_{k-1},K_{g+1}$ by adding the edges in
$E_{H_k}[K_{n-k-g},K_{k-1}]\cup E_{H_k}[K_{g+1},K_{k-1}]$. Then
$$
\kappa_g(H_k)=k-1.
$$
\end{lemma}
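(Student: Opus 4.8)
The plan is to establish the two inequalities $\kappa_g(H_k)\le k-1$ and $\kappa_g(H_k)\ge k-1$ separately, exploiting the fact that the middle clique $K_{k-1}$ is the unique bottleneck joining the two outer cliques. Throughout I would write $A$, $B$, $C$ for the vertex sets of $K_{n-k-g}$, $K_{k-1}$ and $K_{g+1}$, respectively; by the construction $B$ is completely joined to both $A$ and $C$, while there is no edge between $A$ and $C$, and $|A|+|B|+|C|=(n-k-g)+(k-1)+(g+1)=n$.

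For the upper bound I would simply take $X=B$, so that $|X|=k-1$ and $H_k-X$ is the disjoint union of the clique on $A$ and the clique on $C$, hence disconnected. It then remains to check that each component has at least $g+1$ vertices. The component on $C$ has exactly $g+1$ vertices, and the component on $A$ has $n-k-g$ vertices; the hypothesis $g\le\left\lfloor\frac{n-k-2}{2}\right\rfloor$ gives $2g\le n-k-2$, i.e. $n-k-g\ge g+2\ge g+1$. Thus $X$ is an $R_g$-cutset and $\kappa_g(H_k)\le |X|=k-1$.

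For the lower bound I would argue that no vertex set of size at most $k-2$ disconnects $H_k$ at all. Let $X\subseteq V(H_k)$ with $|X|\le k-2$. Since $|B|=k-1>|X|$, at least one vertex $b\in B\setminus X$ survives. Every surviving vertex of $A$ and every surviving vertex of $C$ is adjacent to $b$ by the join construction, while $B\setminus X$ is a clique containing $b$ and the surviving parts of $A$ and of $C$ are themselves cliques. Hence every remaining vertex lies in the single component containing $b$, so $H_k-X$ is connected. Consequently any cutset of $H_k$, and in particular any $R_g$-cutset, has at least $k-1$ vertices, which yields $\kappa_g(H_k)\ge k-1$.

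Combining the two bounds gives $\kappa_g(H_k)=k-1$. The only place requiring genuine care is the size bookkeeping in the upper bound, namely deducing $n-k-g\ge g+1$ from the range of $g$; the structural heart of the argument, that deleting even one vertex short of all of $B$ leaves the graph connected, is immediate from the completeness of the two joins and needs no case analysis. (For completeness one should note the degenerate case $k=1$, where $B$ is empty and $H_1$ is already disconnected, so $\kappa_g(H_1)=0=k-1$ trivially.)
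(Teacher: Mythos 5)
Your proposal is correct and takes essentially the same route as the paper: the same upper-bound cutset $X=V(K_{k-1})$ with the same size bookkeeping, while your lower bound just makes explicit the computation $\kappa(H_k)=k-1$ that the paper invokes through the inequality $\kappa_g(H_k)\geq \kappa(H_k)$ (its Proposition 3.1) and states as ``clearly.'' Your direct verification that removing at most $k-2$ vertices leaves a surviving vertex of $B$ adjacent to everything else, together with the remark on the degenerate case $k=1$, fills in details the paper omits but does not change the argument.
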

\begin{proof}
Let $X=V(K_{k-1})\subseteq V(G)$. Since $1\leq g\leq \left\lfloor
\frac{n-k-2}{2}\right\rfloor$, it follows that $G-X$ is not
connected and each component has at least $g+1$ vertices, and hence
$\kappa_g(H_k)\leq k-1$. Clearly, $\kappa_g(H_k)\geq
\kappa(H_k)=k-1$. So $\kappa_g(H_k)=k-1$.
\end{proof}

\begin{theorem}\label{th5-2}
Let $n,g,k$ be two integers with $1\leq g\leq \left\lfloor
\frac{n-k-2}{2}\right\rfloor$ and $1\leq k\leq n-2g-2$. Then
$$
f(n,k)={n\choose 2}-(n-k-g)(g+1)+1.
$$
\end{theorem}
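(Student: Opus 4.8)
The plan is to prove the two inequalities $f(n,k)\ge \binom{n}{2}-(n-k-g)(g+1)+1$ and $f(n,k)\le \binom{n}{2}-(n-k-g)(g+1)+1$ separately, in accordance with the definition of $f(n,k)$ as the least edge-threshold that forces $\kappa_g(G)\ge k$ on connected graphs of order $n$.

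For the lower bound on $f$, I would use the graph $H_k$ supplied by Lemma \ref{lem5-2}. First I would count its edges: $H_k$ is built from three cliques on $n-k-g$, $k-1$, and $g+1$ vertices together with all edges between $K_{n-k-g}$ and $K_{k-1}$ and all edges between $K_{g+1}$ and $K_{k-1}$, so the only non-adjacent pairs are those joining $K_{n-k-g}$ to $K_{g+1}$. Hence $|E(H_k)|=\binom{n}{2}-(n-k-g)(g+1)$, while Lemma \ref{lem5-2} gives $\kappa_g(H_k)=k-1<k$. Thus a connected graph with exactly $\binom{n}{2}-(n-k-g)(g+1)$ edges can fail the conclusion $\kappa_g\ge k$, so the threshold must strictly exceed this value; as $f(n,k)$ is an integer, $f(n,k)\ge \binom{n}{2}-(n-k-g)(g+1)+1$.

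For the upper bound on $f$, I would argue by contraposition: assuming $\kappa_g(G)\le k-1$ for a connected graph $G$ of order $n$, I show $|E(G)|\le \binom{n}{2}-(n-k-g)(g+1)$. Choose an $R_g$-cutset $S$ with $|S|=s\le k-1$, and let $C_1,\dots,C_r$ (with $r\ge 2$) be the components of $G-S$, of sizes $n_1,\dots,n_r\ge g+1$ and $\sum_i n_i=n-s$. Since no edge joins distinct components, $\overline{G}$ contains every pair with endpoints in different $C_i$, whence $|E(\overline{G})|\ge \sum_{i<j}n_in_j$. It then remains to show $\sum_{i<j}n_in_j\ge (n-k-g)(g+1)$ over all admissible $s$ and size sequences; this yields $|E(G)|=\binom{n}{2}-|E(\overline{G})|\le \binom{n}{2}-(n-k-g)(g+1)$ and closes the argument.

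The main obstacle is exactly this minimization of $\sum_{i<j}n_in_j=\tfrac12\big((n-s)^2-\sum_i n_i^2\big)$, equivalently the maximization of $\sum_i n_i^2$. I would dispatch it with two monotonicity observations. First, writing $m=n-s$ and considering $t$ components of the minimum size $g+1$ together with one component of size $m-t(g+1)$, the derivative in $t$ of $t(g+1)^2+(m-t(g+1))^2$ equals $(g+1)\big[(g+1)-2(m-t(g+1))\big]<0$ because the large component has size at least $g+1$; hence $\sum_i n_i^2$ is largest, and $\sum_{i<j}n_in_j$ smallest, when $r=2$ with sizes $g+1$ and $m-(g+1)$, giving $\sum_{i<j}n_in_j=(g+1)(m-g-1)$. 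Second, $(g+1)(m-g-1)$ is increasing in $m=n-s$, so over $s\le k-1$ its minimum occurs at $s=k-1$, producing the bound $(g+1)(n-k-g)$; the hypothesis $1\le g\le \lfloor (n-k-2)/2\rfloor$ ensures $n-s\ge 2(g+1)$, so the extremal two-component split is feasible. Combining the two directions gives $f(n,k)=\binom{n}{2}-(n-k-g)(g+1)+1$.
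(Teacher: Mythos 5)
Your proposal is correct and follows essentially the same route as the paper: the lower bound uses the very same extremal graph $H_k$ from Lemma \ref{lem5-2}, with $\binom{n}{2}-(n-k-g)(g+1)$ edges and $\kappa_g(H_k)=k-1$, and the upper bound is the paper's contradiction argument, counting non-edges of $\overline{G}$ across the components of $G-X$ for a hypothetical $R_g$-cutset $X$ of size at most $k-1$. The only difference is in detail, not in substance: the paper bounds just the non-edges between $C_1$ and the remaining components via $|V(C_1)|\bigl(n-|V(C_1)|-|X|\bigr)\geq (n-k-g)(g+1)$ in one unproved line, whereas you minimize $\sum_{i<j}n_in_j$ over all admissible component-size sequences, making explicit the endpoint/convexity argument that the paper leaves implicit.
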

\begin{proof}
To show $f(n,k)\geq {n\choose 2}-(n-k-g)(g+1)+1$, we construct $H_k$
defined in Lemma \ref{lem5-2}. Then $\kappa_g(H_k)=k-1$. Since
$|E(G_k)|={n\choose 2}-(n-k-g)(g+1)$, it follows that $f(n,k)\geq
{n\choose 2}-(n-k-g)(g+1)+1$.

Let $G$ be a graph with $n$ vertices such that $|E(G)|\geq {n\choose
2}-(n-k-g)(g+1)+1$. We claim that $\kappa_g(G)\geq k$. Assume, to
the contrary, that $\kappa_g(G)\leq k-1$. Then there exists a vertex
set $X\subseteq V(G)$ and $|X|\leq k-1$ such that each connected
component of $G-X$ has at least $g+1$ vertices. Let
$C_1,C_2,\ldots,C_r$ be the connected components of $G-X$. Clearly,
$|V(C_i)|\geq g+1$ for each $i \ (1\leq i\leq r)$. The number of
edges from $C_1$ to $C_2\cup C_3\cup \ldots \cup C_r$ in
$\overline{G}$ is at least $|V(C_1)|(n-|V(C_1)|-|X|)\geq
(n-k-g)(g+1)$. Clearly, $|E(G)|\leq {n\choose 2}-(n-k-g)(g+1)$,
which contradicts to $|E(G)|\geq {n\choose 2}-(n-k-g)(g+1)+1$. So
$\kappa_g(G)\geq k$, and hence $f(n,k)\leq {n\choose
2}-(n-k-g)(g+1)+1$.

From the above argument, we have $f(n,k)={n\choose
2}-(n-k-g)(g+1)+1$.
\end{proof}

\begin{proposition}\label{pro5-2}
Let $n,g,k$ be three integers with $k=n-2g-2$ and $g\geq 1$. Then
$$
g(n,k)={n\choose 2}-(g+1)^2.
$$
\end{proposition}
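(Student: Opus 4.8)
The plan is to prove the two inequalities $g(n,k)\le {n\choose 2}-(g+1)^2$ and $g(n,k)\ge {n\choose 2}-(g+1)^2$ separately, leaning on the two facts already established: Corollary~\ref{cor3-1}, which caps $\kappa_g$ at $n-2g-2$, and Proposition~\ref{pro1-3}, which bounds the size of any graph whose $g$-extra connectivity is defined by ${n\choose 2}-(g+1)^2$. The conceptual point is that the hypothesis $k=n-2g-2$ is precisely the largest value $\kappa_g$ can attain, so the requirement $\kappa_g(G)\le k$ is automatically met by every graph on $n$ vertices \emph{for which $\kappa_g$ exists}, and the only way the defining implication of $g(n,k)$ can break is if $\kappa_g$ ceases to exist.

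First I would establish the lower bound $g(n,k)\ge {n\choose 2}-(g+1)^2$. Let $G$ be any graph of order $n$ with $|E(G)|\le {n\choose 2}-(g+1)^2$ whose $g$-extra connectivity is defined. Since $k=n-2g-2$, Corollary~\ref{cor3-1} gives $\kappa_g(G)\le n-2g-2=k$, so the implication holds at the threshold ${n\choose 2}-(g+1)^2$, and hence $g(n,k)\ge {n\choose 2}-(g+1)^2$. To certify that this threshold is genuinely attained, I would exhibit a graph $G_0$ with exactly ${n\choose 2}-(g+1)^2$ edges and $\kappa_g(G_0)=k$: take $V(G_0)=A\cup B\cup R$ with $|A|=|B|=g+1$ and $|R|=n-2g-2$, and let $G_0=K_n$ with all $(g+1)^2$ edges between $A$ and $B$ deleted. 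Every vertex of $R$ is universal, so any $R_g$-cutset must contain all of $R$; conversely $G_0-R=K_{g+1}\cup K_{g+1}$ splits into two components of size $g+1$, whence $\kappa_g(G_0)=|R|=n-2g-2=k$.

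Next I would establish the upper bound $g(n,k)\le {n\choose 2}-(g+1)^2$ by showing the threshold cannot be raised by one. Fix a graph $G^{*}$ of order $n$ with exactly ${n\choose 2}-(g+1)^2+1$ edges; this is feasible because $(g+1)^2\ge 1$ places the quantity strictly between $0$ and ${n\choose 2}$ (for instance, delete $(g+1)^2-1$ edges from $K_n$). Since $|E(G^{*})|>{n\choose 2}-(g+1)^2$, Proposition~\ref{pro1-3} shows $\kappa_g(G^{*})$ does not exist, so $G^{*}$ witnesses the failure of the implication at the value ${n\choose 2}-(g+1)^2+1$. Therefore $g(n,k)\le {n\choose 2}-(g+1)^2$, and combining the two bounds gives the claimed equality.

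The main obstacle — and the step I expect to require the most care — is the bookkeeping around graphs whose $g$-extra connectivity is undefined. For the lower bound the clause $\kappa_g(G)\le k$ must be read as imposing no condition on graphs without an $R_g$-cutset, whereas for the upper bound it is exactly the nonexistence of $\kappa_g(G^{*})$ that breaks the implication. I would state this convention explicitly at the outset, in line with the paper treating $\kappa_g$ only when it exists, so that the two directions are handled consistently; with that convention fixed, both inequalities follow immediately from Corollary~\ref{cor3-1} and Proposition~\ref{pro1-3}, and no further computation is needed.
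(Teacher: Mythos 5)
Your two halves rest on contradictory readings of the definition of $g(n,k)$, and the difficulty you flag in your last paragraph is not bookkeeping that a stated convention can dispose of: whichever single convention you fix, one of your two inequalities becomes false. If a graph with no $R_g$-cutset counts as violating ``$\kappa_g(G)\le k$'' (the reading your upper bound needs, since your witness $G^{*}$ breaks the implication precisely by having $\kappa_g$ undefined), then your lower bound fails: the star $K_{1,n-1}$ is connected, has $n-1\le {n\choose 2}-(g+1)^2$ edges (since ${n-1\choose 2}\ge (g+1)^2$ once $n\ge 2g+3$), and for $g\ge 1$ it has no $R_g$-cutset at all --- deleting any set containing the center leaves only isolated vertices, while deleting any set avoiding the center leaves a connected star --- so it violates the implication already at threshold $n-1$ and would force $g(n,k)\le n-2$. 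If instead graphs without an $R_g$-cutset impose no condition (the reading your lower bound uses via Corollary \ref{cor3-1}), then your upper bound collapses: $G^{*}$ satisfies the implication vacuously, as does every graph of order $n$ (any graph whose $\kappa_g$ exists automatically has $\kappa_g\le n-2g-2=k$), so $g(n,k)$ would equal ${n\choose 2}$ rather than ${n\choose 2}-(g+1)^2$. Hence there is no consistent convention under which ``both inequalities follow immediately,'' and the proposal cannot be completed as written.

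The paper avoids this trap by never making the universal claim your lower bound makes. Its argument is consistent with reading $g(n,k)$ as the maximum number of edges of a graph of order $n$ that actually achieves $\kappa_g\le k$ (existence of an $R_g$-cutset being part of the property): the upper bound is exactly Proposition \ref{pro1-3} (more than ${n\choose 2}-(g+1)^2$ edges forces $\kappa_g$ to be undefined, so the property fails), and the lower bound is purely constructive, exhibiting $F_k$ --- which is precisely your $G_0$, namely $K_n$ with the $(g+1)^2$ edges between two disjoint $(g+1)$-sets removed --- with ${n\choose 2}-(g+1)^2$ edges and $\kappa_g(F_k)=n-2g-2=k$. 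So your construction of $G_0$ and your use of Proposition \ref{pro1-3} are the paper's two ingredients; what must be discarded is the threshold-style ``every sparse graph complies'' step, which is exactly what smuggles the second, incompatible reading into your proof.
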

\begin{proof}
From Proposition \ref{pro1-3}, we have $g(n,k)\leq {n\choose
2}-(g+1)^2$. Let $F_k$ be the graph obtained from three cliques
$K_{n-2g-2},K_{g+1},K_{g+1}$ by adding the edges in
$E_{G_k}[K_{n-2g-2},K_{g+1}]\cup E_{G_k}[K_{g+1},K_{n-2g-2}]$. Then
$e(F_k)={n\choose 2}-(g+1)^2$ and $\kappa_g(F_k)\leq k=n-2g-2$, and
hence $g(n,k)\geq {n\choose 2}-(g+1)^2$. So $g(n,k)={n\choose
2}-(g+1)^2$.
\end{proof}

\begin{remark}\label{rem1-1}
Suppose $k<n-2g-2$. Let $T_n^*$ be the tree of order $n$ defined in
Lemma \ref{lem4-1}. Then $\kappa_g(T_n^*)=n-2g-2>k$ and
$e(T_n^*)=n-1$, and hence $g(n,k)\leq n-2$. Since we consider only
connected graphs, it follows that $g(n,k)\geq n-1$, a contradiction.
So $g(n,k)$ does not exist for $k<n-2g-2$.
\end{remark}

\section{Concluding Remark}

In this paper, we focus our attention on the $g$-extra connectivity
of general graphs. We have proved that $1\leq \kappa_g(G)\leq
n-2g-2$ for $0\leq g\leq \left\lfloor \frac{n-3}{2}\right\rfloor$.
Trees with $\kappa_g(T_n)=n-2g-2$ for general $g \ (0\leq g\leq
\left\lfloor \frac{n-3}{2}\right\rfloor)$ are characterized in this
paper. But the graphs with $\kappa_g(G)=n-2g-2$ is still unknown.
From Proposition \ref{pro3-1}, the classical $\kappa(G)$ is a
natural lower bound of $\kappa_g(G)$, but there is no upper bound of
$\kappa_g(G)$ in terms of $\kappa(G)$. From Proposition
\ref{pro3-2}, the classical $diam(G)$ is a natural upper bound of
$\kappa_g(G)$, but there is no lower bound of $\kappa_g(G)$ in terms
of $diam(G)$.

%%%%%%%%%%%%%%%%%%%%%%%%%%%%%%%%%%%%%%%%%%%%%%%%%%%%%%%%%%%%%%%%%%%%%%%%%%%%%%%%%%%%%%%%%%%%%%
%%%%%%%%%%%%%%%%%%%%%%%%%%%%%%%%%%%%%%%%%%%%%%%%%%%%%%%%%%%%%%%%%%%%%%%%%%%%%%%%%%%%%%%%%%%%%%
%%%%%%%%%%%%%%%%%%%%%%%%%%%%%%%%%%%%%%%%%%%%%%%%%%%%%%%%%%%%%%%%%%%%%%%%%%%%%%%%%%%%%%%%%%%%%%

\end{document}